\newcommand{\II}{\mathds{1}}
\newcommand{\JJ}{\mathbb{J}}
\newcommand{\ii}{\mathbbm{i}}
\newcommand{\jj}{\mathbbm{j}}
\newcommand{\kk}{\mathbbm{k}}
\newcommand{\smalleftarrow}{{\mkern-2mu\leftarrow}}
\newcommand{\smalleftarrowscript}{{\mkern-2mu\leftarrow}}
\newcommand{\vertiii}[1]{{\left\vert\kern-0.25ex\left\vert\kern-0.25ex\left\vert #1 
    \right\vert\kern-0.25ex\right\vert\kern-0.25ex\right\vert}}
\DeclareMathOperator{\E}{\mathbb{E}}
\def\P{\mathbb{P}}
\newtheorem{thm}{Theorem}
\newtheorem{lemma}{Lemma}
\newtheorem{rem}{Remark}
\theoremstyle{plain}
\newcounter{ass}
\newenvironment{ass}%
{\begin{enumerate}
\stepcounter{ass}
\renewcommand{\labelenumi}{\textbf{\theenumi}}
\renewcommand{\theenumi}{(A.\arabic{ass})}
\item }
{\end{enumerate}}%
\begin{document}

\begin{frontmatter}

\title{Hydrodynamic limit of a stochastic model of proliferating cells with chemotaxis.%
}

\author{Rados{\l}aw Wieczorek}
\address{University of Silesia, ul. Bankowa 14, 40-007 Katowice, Poland.}
\ead{radoslaw.wieczorek@us.edu.pl}

\begin{abstract}
A hybrid stochastic individual-based model of proliferating 
cells with chemotaxis is presented. The model is expressed 
by a branching diffusion process coupled to a partial differential equation 
describing concentration of chemotactic factor.
It is shown that in the hydrodynamic limit when number of cells goes to infinity
the model converges to the solution of a nonconservative Patlak-Keller-Segel-type system.
A nonlinear mean-field stochastic model is defined and it is proven that the 
movement of descendants of a single cell in the individual model 
converges to this mean-field process.
\end{abstract}

\begin{keyword}
propagation of chaos\sep stochastic particles system\sep branchind diffusion\sep chemotaxis\sep mean field approximation
\MSC[2010]
60K35\sep 92C17\sep 60F99\sep 60C35\sep 60H30\sep 60G57
\end{keyword}
\end{frontmatter}

\section{Introduction}

This paper is concerned with a stochastic model of biological cells that undergo chemotaxis and  proliferate.
The hydrodynamic limit of this model is shown to solve the equation of Patlak-Keller-Segel type with
cell proliferation of the form
\begin{subequations}
  \label{eqn:PKS}
  \begin{align}[left ={\empheqlbrace}]
   \label{eqn:PKSp}
 	\partial _t p(t,\mathbf{x})=&\frac{1}{2}\Delta p(t,\mathbf{x})+\nabla\left(p(t,\mathbf{x})\mathbf{b}(\mathbf{x},\nabla\varrho)\right)+\lambda(\mathbf{x}, \nabla\varrho(t,\mathbf{x}))p(t,\mathbf{x})\\
   \label{eqn:PKSrho}
 	\partial _t \varrho(t,\mathbf{x})=&D\Delta\varrho(t,\mathbf{x})- r \varrho(t,\mathbf{x})+ \alpha [\kappa\! * p(t,\cdot)] (\mathbf{x}) .
  \end{align}
\end{subequations}

Mathematical description of biological cells undergoing chemotaxis, 
i.e. moving in response to gradients of chemical factors, has a long tradition and 
still attracts great interest. The seminal papers by 
Patlak \cite{Patlak1953} and Keller and Segel \cite{KellerSegel1970} 
stimulated the whole branch of applied mathematics
(one can find extensive reviews  \cite{Horstmann2003,HillenPainter2009} with many citations).
The limit passages between microscopic, kinetic models of collective cell behaviour 
to the macroscopic description with PDE's
has received great attention in recent four decades
\cite{Oelschlager84,Sznitman1984,Morale-et-al-05,RudnickiWieczorek2006a,FournierJourdain2017,Wieczorek2015,BudhirajaFan2017,%
CapassoWieczorek2020,BoiCapassoMorale2000}.
The first rigorous proof of the convergence of a stochastic particle system 
to chemotaxis equations system was given in \cite{Stevens2000} following the work of \cite{Oelschlager1989}.
Later, many authors continued these ideas \cite{HwangKangStevens2005,FournierJourdain2017,BudhirajaFan2017,BubbaLorenziMacfarlane2020}.	
Macroscopic mean-field limits of weakly interacting particles 
where investigated also in \cite{Meleard1996,BolleyGuillinVillani2007,KotelenezKurtz2010,LiLiuYu2019}.
Most particle approximations of PKS type equations consider cells and chemical particles in a similar way
\cite{Stevens2000,HwangKangStevens2005,FournierJourdain2017}, 
while they have completely different scale.
Si it is natural to consider cells as stochastic particles and chemical factor as continuous field
described by a partial differential equation \cite{CapassoMorale2013,BudhirajaFan2017,CapassoFlandoli2018,CapassoWieczorek2020}.
Although usually the equation or system describing the cell population is conservative,
i.e. it preserves total mass (the number of particles),
in many models it is natural to assume that the cell population is not constant and
the cells proliferate 
\cite{WoodwardTysonMyerscoughMurrayBudreneBerg1995,Wang2000,WangFordHarvey2008,BansayeSimatos2015}.
It is crucial for angiogenesis models \cite{CapassoMoraleFacchetti2012,McDougallWatsonDevlinMitchellChaplain2012,CapassoMorale2013,CapassoWieczorek2020},
where the proliferation is responsible for vessel branching.

In this paper we use the pathwise propagation of chaos approach similar to that of \cite{BudhirajaFan2017},
but we allow for the proliferation of cells. 
Therefore, the number of cells (or total mass) is not conserved 
and, moreover, the individual process has noncontinuous trajectories.
To retain the pathwise description and convergence, we write the birth and death process 
as a solution of stochastic equations (cf. \cite{GarciaKurtz2006}).


We construct a sequence of processes indexed by the initial number of particles $n_0$.
The description of our process can be divided into three components:
\begin{description}
\item{\textbf{Movement of cells.}}
The cells move according to the following SDE
\begin{equation}
\label{eqn:particlemovement}
d\mathbf{X}^{n_0}_{i,\jj}(t) =   
	\mathbf{b}(\mathbf{X}^{n_0}_{i,\jj}(t), \nabla\varrho_{n_0}(t,\mathbf{X}^{n_0}_{i,\jj}(t)))dt +  
	\sigma \,dW_{i,\jj}(t),
\end{equation}
where $W_{i,\jj}(t)$ are independent Brownian motions, $\sigma$ is the diffusion coefficient
and $\mathbf{b}$ is the (chemotactic) drift that depends on the position  of  a cell and 
the gradient of the concentration $\varrho_{n_0}$ of some chemical factor.
One can also allow for the dependence of $\mathbf{b}$ on the concentration $\varrho_{n_0}$ itself,
not only on its gradient, and all facts and proofs of the paper remain true,
but for the sake of shortness and simplicity of the notation we 
neglect this dependence.  
The indexing $i,\jj$ will be explained later. 

\item{\textbf{Equation for chemoreactant.}}
The concentration $\varrho_{n_0}$ of chemotactic factor satisfies the following PDE
\begin{equation}
\label{eqn:Drho_n}
\frac{\partial \varrho_{n_0}(t,\mathbf{x})}{\partial t} = 
D \Delta \varrho_{n_0}(t,\mathbf{x}) -r \varrho_{n_0}(t,\mathbf{x}) + \alpha\, \kappa\! *\! \xi^n_t\,(\mathbf{x}),
\end{equation}
where $D$, $r$ and $\alpha$  are diffusion, degradation and production rates. 
The measure
\[
\xi^{n_0}_t = \frac{1}{n_0}\sum_{{i,\jj}}\delta_{X^{n_0}_{i,\jj}(t)},
\]
is the empirical measure of all cells
and function $\kappa$ is a mollifying kernel that represents the fact that cells are actually not points, 
but have spatial size. The spatial convolution
\begin{align*}
\kappa\! * \xi^{n_0}_t\,(\mathbf{x}) 
	=& \int_{\mathbb{R}^d} \kappa(x-y) \xi^{n_0}_t(dy) = 
\frac{1}{n_0} \sum_{i,\jj}\int_{\mathbb{R}^d} \kappa(x-y)\delta_{X_{i,\jj}(t)}(dy)
\\=&
\frac{1}{n_0} \sum_{i,\jj} \kappa(x-X_{i,\jj}(t))
\end{align*}
is a mollified version of the empirical measure describing spatial positions of cells,
responsible for the production of the chemoreactant.
 From the mathematical point of view this allows us to consider 
classical solutions to equation \eqref{eqn:Drho_n}.

\item{\textbf{Cell population dynamics.}}
We assume that cells may die or proliferate with rates depending on the chemoreactant. 
Death means that a cell disappears and 
proliferation means that a cell dies leaving two new daughter cells at the same place as the mother cell.
The birth rate of a cell placed at $x$ at time $t$ 
depend on the position and on the the concentration of chemoreactant 
and is given by $\lambda_{\text{b}}(\mathbf{x},\varrho^{n_0}(t,\mathbf{x}))$
and the mortality rate is $\lambda_{\text{d}}(\mathbf{x},\varrho^{n_0}(t,\mathbf{x}))$.
\end{description}

\vspace{1em}
The main goal of the paper is to prove that in hydrodynamic limit the model converge
to the solutions of \eqref{eqn:PKS} (Theorem \ref{thm:convinD}), 
and, most importantly, to prove that, if the initial number of cells tends to infinity,
the trajectories of the descendants of a single cell converge to the trajectories
of the hybrid nonlinear mean-field model defined in section \ref{sec:hybrid} (Theorem \ref{thm:conv}).
To this aim, we also want to rigorously define the described individual processes
and show wellposedness. 

We call the model hybrid for three reasons: first is that in the base model the discrete individual-based model is coupled to the
continuous description of chemoreactant by PDEs%
\cite{McDougallWatsonDevlinMitchellChaplain2012,CapassoMorale2013,CapassoWieczorek2020}. 
Second, the stochastic trajectories of cells have jumps due to birth and death of cells \cite{BujorianuLygeros2006,BuckwarRiedler2011}.
Third, pertains specifically to the intermediate model described in section 2.2, which is obtained by the convergence 
of fully stochastic individual based models in Theorem \ref{thm:conv}.
In this hybrid model, the stochastic particle system is coupled to the deterministic solution of the asymptotic PDE \eqref{eqn:PKS}.

The article is organised as follows. 
In the next section  we introduce the notation and present 
the rigorous definitions of the microscopic model and two `hybrid' mean field models
and write up the macroscopic equations. Section \ref{sec:wellp} is devoted to the presentation of assumptions and
results concerning the wellposedness. 
Section \ref{sec:conv} contains the convergence results.
The proofs are presented in section \ref{sec:proofs}.

\section{Definitions of the processes}
\label{sec:def}
In this section we formally define the considered processes. We start with the individual, fully microscopic model.
\subsection{Definition of the microscopic processes}
\label{sec:IBM}%
Note that if we use the empirical measure approach 
even for a simple two Brownian particles case, then giving the initial condition $\delta_{X_1^0}+\delta_{X_2^0}$
and two Brownian motions $W_1$ and $W_2$ does not  guarantee pathwise uniqueness 
--- we need to know the order of particles. 
The problem gets harder if the number of particles varies in time.
Since our goal is to obtain a pathwise propagation of chaos result, 
we need to define a process in a more direct way. 

Therefore, we construct the process in the following way. 
We mark particles by means of a subtree of Ulam-Harris tree, namely let
\begin{equation}
\JJ = \bigcup_{n\in\mathbb{N}_0}{\{0,1\}}^n
\end{equation}
 with a convention that ${\{0,1\}}^0=\emptyset$ means the root. 
Elements of $\JJ$ will be written as blackboard bold lowercase letters such as $\ii,\jj,\kk$.
If we write $\kk=\jj0$, we mean that $\kk$ is longer by one then $\jj$ 
and is created from $\jj$ by adding $0$ at the end. 
Moreover, we will denote by
$\jj^\smalleftarrow$ element of $\JJ$ obtained from $\jj$ by removing last number,
eg. if $\jj=\emptyset101$, then $\jj^\smalleftarrow=\emptyset10$.
Moreover, let us assume that
\begin{ass}\label{ass:WN}
$\left(W_{i,\jj}\right)_{i\in\mathbb{N},\jj\in\JJ}$ is an infinite array 
of independent $d$-dimensional standard Wiener processes, 
and $\left(\mathcal{N}_{i,\jj}\right)_{i\in\mathbb{N},\jj\in\JJ}$  is an array
of independent standard (i.e. such that intensity is Lebesgue measure) Poisson point processes on $[0,\infty)\times[0,\infty)$
on a complete probability space $(\Omega,\mathcal{F},\P)$.
\end{ass}
These processes will be used as a source of randomness in all processes, 
in particular in Eq. \eqref{eqn:IBMXn} and \eqref{eqn:hybridX}.

We actually define a sequence of processes indexed by the initial number of cell $n_0$.
Let us assume that we start with $n_0$ cells and let each initial 
cell be described by its position in $\mathbb{R}^d$,
the number $i$, and $\jj=\emptyset$ denoting that it is the first cell in its own tree of inheritance.
The initial cells are located at $\mathbf{X}_{i,\emptyset}(0)\in \mathbb{R}^d$, $i=1,\dots,n_0$.
At each branching event, two new cells appear as daughters of a cell described by $\mathbf{X}^{n_0}_{i,\jj}$
at the same place as the mother cell.
The daughter cells inherit the position, the cell line number $i$ and obtain new subsequent indices 
$\kk_1=\jj0$ and $\kk_2=\jj1$.
So, each cell is described by a triple 
$(\mathbf{x},i,\jj)\in{\mathbb{R}^d}\times \{1,\dots,n_0\}\times \JJ$
and possesses its own Brownian motion $W_{i,\jj}$ and 
a Poisson clock $\mathcal{N}_{i,\jj}$.
Let $\tau^{n_0}_{i,\jj}$ denote the moment when $i,\jj$-th cell appears 
--- that is $\tau^{n_0}_{(i,\emptyset)}$ is always 0,
and let $\sigma^{n_0}_{i,\jj}$ be the moment when $i,\jj$-th cell dies 
(the production of daughter cells also means death of the mother).
The time of death  $\sigma^{n_0}_{i,\jj}$ of an $(i,j)$-th cell is defined as a minimal $\sigma$ for which
\begin{multline}
\label{eqn:sigmadef}
\mathcal{N}_{i,\jj}\biggl(
	(t,z):\,z\in	
		\left[
				0,
				\lambda_{\text{b}}\bigl(\mathbf{X}^{n_0}_{i,\jj}(t),\varrho^{n_0}(t,\mathbf{X}^{n_0}_{i,\jj}(t))\bigr)
				+\lambda_{\text{d}}\bigl(\mathbf{X}^{n_0}_{i,\jj}(t),\varrho^{n_0}(t,\mathbf{X}^{n_0}_{i,\jj}(t))\bigr)
		\right],\\
		t\in\left[\tau^{n_0}_{i,\jj},\sigma\right]
\biggr)
=1
\end{multline}
with a convention that $\min \emptyset =\infty$.
The times of birth  $\tau^{n_0}_{(i,\jj0)}=\tau^{n_0}_{(i,\jj1)}$ of daughters of $i,\jj$-th cell 
are defined as a minimal $\tau$ for which
\begin{equation}
\label{eqn:taudef}
\mathcal{N}_{i,\jj}\left(
	(t,z):\,z\in	
		\left[
				0,\lambda_{\text{b}}\bigl(\mathbf{X}^{n_0}_{i,\jj}(t),\varrho^{n_0}(t,\mathbf{X}^{n_0}_{i,\jj}(t))\bigr)
		\right],
		t\in\left[\tau^{n_0}_{(i, \jj)},\tau \right]
\right)
=1.
\end{equation}
Clearly, not for every $\jj\in\JJ$ a cell will exist, 
e.g. if $\kk$-th cell dies, no cell with index 
created from $\kk$ by appending any zeros or ones cannot be born. 
In this case we have $\tau^{n_0}_{i,\jj}=\sigma^{n_0}_{i,\jj}=\infty$.

The movement of a $i,\jj$-th cell between time $\tau^{n_0}_{i,\jj}$ and  $\sigma^{n_0}_{i,\jj}$ 
is given by
\begin{subequations}
\label{eqn:IBM}
\begin{equation}
\label{eqn:IBMXn}
d\mathbf{X}^{n_0}_{i,\jj}(t) =  \mathbf{b}(\mathbf{X}^{n_0}_{i,\jj}(t), \nabla\varrho^{n_0}(t,\mathbf{X}^{n_0}_{i,\jj}(t)))dt 
											 + \sigma \,dW_{i,\jj}(t).
\end{equation}
with initial condition
$\mathbf{X}^{n_0}_{i,\jj}\left(\sigma^{n_0}_{i,\jj}\right)=
	\mathbf{X}^{n_0}_{i,\jj^\smalleftarrowscript}\left(\sigma^{n_0}_{i,\jj}\right)$ 
coupled with the equation for nutrient field
\begin{equation}
\label{eqn:IBMrhon}
\frac{\partial \varrho^{n_0}(t,\mathbf{x})}{\partial t} = 
D \Delta \varrho^{n_0}(t,\mathbf{x}) -r \varrho^{n_0}(t,\mathbf{x})  + \alpha[K * \xi^{n_0}_t](\mathbf{x})
\end{equation}
\end{subequations}
with $\varrho^{n_0}(0,\cdot)=\varrho_0$,
where $\xi^{n_0}_t$, given by 
\begin{equation}
\label{eqn:xi_n}
\xi^{n_0}_t = \frac{1}{n_0}\sum_{{i,\jj}}
			\II_{\left[\sigma^{n_0}_{i,\jj},\tau^{n_0}_{i,\jj}\right)}(t)\delta_{X^{n_0}_{i,\jj}(t)},
\end{equation}
is an empirical measure of all cells alive at time $t$.

Note that the generating processes $W_{i,\jj}$ and $\mathcal{N}_{i,\jj}$
have intentionally no index $n_0$. In order to obtain pathwise convergence, 
they are shared by processes with all $n_0$. They are defined for all $i\in\mathbb{N}$, 
but the definition of $n_0$-th process uses only those with $i=1,\dots,n_0$.
\begin{rem}
Note, that equation \eqref{eqn:IBMXn} can be written in a form
\begin{align*}
\mathbf{X}^{n_0}_{i,\jj}(t) =& \mathbf{X}^{n_0}_{i,\jj}\left(\sigma^{n_0}_{i,\jj}\right)
	+ \int_{\sigma^{n_0}_{i,\jj}}^t \mathbf{b}(\mathbf{X}^{n_0}_{i,\jj}(s), \nabla\varrho^{n_0}(s,\mathbf{X}^{n_0}_{i,\jj}(s)))\,ds 
&&\\&	+ \sigma \left(W_{i,\jj}(t)-W_{i,\jj}(\sigma^{n_0}_{i,\jj})\right),
	&&\text{for }
	t\in \left[\sigma^{n_0}_{i,\jj},\tau^{n_0}_{i,\jj}\right).
\end{align*}
It does not demand using Ito integral and has a pathwise unique solution.
\end{rem}

\subsection{Hybrid mean-field model}
\label{sec:hybrid}
The next model considered will be the limit of the individual-based model (cf. Theorems \ref{thm:convinD} and \ref{thm:conv}).
We consider one initial cell at position $\mathbf{\bar {X}_\emptyset}(0)=\mathbf{\bar {X}_{1,\emptyset}}(0)$ 
with the same population dynamics as before. Its descendants will be denoted by $\mathbf{\bar {X}_\jj}$ with
$\jj\in\JJ$ and their birth and death times are $\bar\sigma_{\jj}$ and $\bar\tau_{\jj}$, respectively,
defined in analogous way to \eqref{eqn:sigmadef}-\eqref{eqn:taudef} 
with $\varrho^{n_0}$ replaced by $\varrho$ and the same Poisson clocks
$\mathcal{N}_{1,\jj}$, as for the first cell line in each microscopic model.
The movement of $\jj$-th cell during its life is given by
\begin{subequations}
\label{eqn:hybrid}
\begin{equation}
\label{eqn:hybridX}
d\mathbf{\bar {X}_\jj}(t) =  
	F\bigl(\mathbf{\bar X}_\jj, \nabla \varrho(t,\mathbf{\bar X}_\jj)\bigr)dt + \sigma \,dW_{1,\jj}(t),
\end{equation}
coupled with the mean-field chemoreactant equation
\begin{equation}
\label{eqn:hybridrho}
\dfrac{\partial \varrho(t,\mathbf{x})}{\partial t} = 
D \Delta \varrho(t,\mathbf{x}) -r \varrho(t,\mathbf{x})  + \alpha(K\! * \bar\mu_t)(\mathbf{x}),
\end{equation}
\end{subequations}
with $\varrho(0,\cdot)=\varrho_0$,
where $\bar\mu_t$ is the mean of the empirical measure of $\left(\mathbf{\bar {X}_\jj}(t)\right)_{\jj\in\JJ}$,
namely
\begin{equation}
\label{eqn:barmu}
\bar\mu_t(A)=\E\bar\xi_t(A)
=\E\left[
\sum_{{i,\jj}}
			\II_{\left[\bar\tau_{\jj},\bar\sigma_{\jj}\right)}(t)\delta_{X^{n_0}_{i,\jj}(t)}(A)
	\right],
\text{ for } A\in\mathcal{B}(\mathbb{R}^d),
\end{equation}
where $\bar\xi_t$, $\bar\tau_{\jj}$ and $\bar\sigma_{\jj}$ are defined analogously to 
$\xi^{n_0}_t$, $\tau^{n_0}_{i,\jj}$ and $\sigma^{n_0}	_{i,\jj}$.
\begin{rem}
Note that \eqref{eqn:hybridrho} differs from \eqref{eqn:IBMrhon} 
only by replacing $\xi^{n_0}_t$ by $\bar\mu_t$.
\end{rem}

\subsection{Macroscopic model: Patlak-Keller-Segel type equation with proliferation}
The limit macroscopic model is given by the system of equations
\begin{subequations}
  \label{eqn:macro}
  \begin{align}[left ={\empheqlbrace}]
   \label{eqn:macrop}
 	\partial _t p(t,\mathbf{x})=&\frac{1}{2}\Delta p(t,\mathbf{x})+\nabla\left(p(t,\mathbf{x})\mathbf{b}(\mathbf{x},\nabla\varrho)\right)+\lambda(\mathbf{x}, \nabla\varrho(t,\mathbf{x}))p(t,\mathbf{x})\\
   \label{eqn:macrorho}
 	\partial _t \varrho(t,\mathbf{x})=&D\Delta\varrho(t,\mathbf{x})- r \varrho(t,\mathbf{x})+ \alpha [\kappa\! * p(t,\cdot)] (\mathbf{x}) 
  \end{align}
\end{subequations}
with $\lambda=\lambda_{\text{b}}-\lambda_{\text{d}}$ where
[$\kappa\! * p(t,\cdot)](\mathbf{x}) = \int_{\mathbb{R}^d} \kappa(x-y) p(t,y)dy$.
\subsection{Second hybrid model}
One of the motivations of the convergence result of this paper is to show the possibility
of replacing in simulations a multiparticle model \eqref{eqn:sigmadef}-\eqref{eqn:IBMrhon}
by a one with smaller number cells. However, the hybrid model \eqref{eqn:hybridX}-\eqref{eqn:hybridrho}
still includes the number of proliferating cells.
Therefore, we present here alternative, much simpler version of the hybrid model
that is related to the limit of the inidivilual-based one.
Namely, let us consider a single cell which which moves according to the same rule as the particles in previous models,
and endow it with a variable $M(t)$ denoting its mass:
\begin{subequations}
  \label{eqn:XMrho}
  \begin{align}[left ={\empheqlbrace}]
  \label{eqn:XMrhoX}
	&d\mathbf{X}(t) =  \mathbf{b}(\mathbf{X}(t),\nabla\varrho(t,\mathbf{X}(t)))dt + \sigma \,dW(t),\\
  \label{eqn:XMrhoM}
	&dM(t) = \lambda \bigl(\mathbf{X}(t),\varrho(t,\mathbf{X}(t))\bigr)M(t),\\
  \label{eqn:XMrhoR}
	&\dfrac{\partial \varrho(t,\mathbf{x})}{\partial t} = 
		D \Delta \varrho(t,\mathbf{x}) -r \varrho(t,\mathbf{x}) + \alpha(K * \mu_t)(\mathbf{x}),
  \end{align}
\end{subequations}
where 
\begin{equation}
\label{eqn:def-mu}
\mu_t(A)=\E\left[M(t)\II_{A}(X(t))\right]
=\int_{A\times [0,\infty)} m\, \P_{(\mathbf{X},M)}(d\mathbf{x},dm) \text{ for } A\in\mathcal{B}(\mathbb{R}^d)
\end{equation}
is the \textit{average  mass in the area $A$}.
\begin{rem}
The measure $\mu_t$ is equal to $\bar\mu_t$ given by \eqref{eqn:barmu}.
Note that equations \eqref{eqn:hybridrho} and \eqref{eqn:XMrhoR}
are then the same.
These facts will be proven and used in the proof of wellposedness of the hybrid model 
\end{rem}
\begin{rem}
Note moreover, that if $\mu_0$ is absolutely continuous, then the density of $\mu_t$ 
(and therefore $\bar\mu_t$) satisfies \eqref{eqn:macrop}.
In that case all equations \eqref{eqn:hybridrho}, \eqref{eqn:macrorho} and \eqref{eqn:XMrhoR}
coincide.
\end{rem}

\subsection{Other remarks}
Usually, the aim of a rigorous proof of convergence of individual models to macroscopic ones is twofold:
from one hand side, it is convenient to be able to derive  the macroscopic model of population
from the primitive rules that govern the individuals
and to know if and why the macroscopic model 
properly approximates the collective behaviour.
On the other hand side, 
often the individual based models are easier to simulate and the proofs of convergence
play significant role in justifying the use of 
individual-based model simulations as Monte Carlo methods for complicated PDEs.
We have also the third motivation that justifies the hybrid mean-field model.
Sometimes (cf. \cite{CapassoWieczorek2020,CapassoFlandoli2018,BonillaCapassoAlvaroCarretero2014})
simulating the individual-based model is costly, as in our case where it demands simulating 
branching diffusion coupled to PDE.
On the other hand side, solving only the macroscopic PDE can be insufficient, because we do not follow the geometry of trajectories.
In such a case we can use the hybrid mean-field model in the following way:
firstly solve numerically the non-stochastic  macroscopic model \ref{eqn:macro} 
for density of cells and concentration of chemoreactant,
and then simulate the branching diffusion \ref{eqn:hybridX+bd}
with already given evolution of chemoreactant $\varrho$.

\section{Assumptions and wellposedness}
\label{sec:wellp}
One of main goals of this paper is to make the definition of the individual model as strict as possible 
while keeping it readable. To that aim, besides the description in  section \ref{sec:IBM}, 
we need to define the state space of the process, which can be done in various ways.
We add to the space of positions $\mathbb{R}^d$ an additional state $\phi$ denoting a nonexisting cell
and we describe the state of all particles as an infinite array of points from $\mathbb{R}^d\cup\{\phi\}$
indexed by $(i,\jj)\in\mathbb{N}\times \JJ$ such that only finite numbers of elements are different then $\phi$,
that is 
\begin{equation}
\label{eqn:def-XX}
\mathbb{X} = \left\{
\left(\mathbf{x}_{i,\jj}\right)_{(i,\jj)\in\mathbb{N}\times \JJ}:\mathbf{x}_{i,\jj}\in\mathbb{R}^d\cup\{\phi\}, 
\text{ such that } \#\{\mathbf{x}_{i,\jj}:\mathbf{x}_{i,\jj}\neq\phi\} \text{ is finite}
\right\}
\end{equation}
with a natural metrics
\begin{equation}
d_{\mathbb{X}}(\mathbbm{x},\mathbbm{y})
= \max_{(i,\jj)\in\mathbb{N}\times \JJ} |\mathbf{x}_{i,\jj}-\mathbf{y}_{i,\jj}|,\quad 
\text{ for } \mathbbm{x}=\left(\mathbf{x}_{i,\jj}\right)_{(i,\jj)\in\mathbb{N}\times \JJ},
\mathbbm{y}=\left(\mathbf{y}_{i,\jj}\right)_{(i,\jj)\in\mathbb{N}\times \JJ}\in\mathbb{X},
\end{equation}
with a convention that $|\mathbf{x}-\phi|=1$ for any $\mathbf{x}\in\mathbb{R}^d$.
Because of the birth and death process the trajectories of the microscopic 
process are not continuous. We apply the standard convention to use c\'adl\'ag paths,
so the space of trajectories will be the Skorokhod space $D_{\mathbb{X}}[0,\infty)$.

Now, we can formally describe the branching diffusion component of the microscopic model 
as the solution to the following system of SDEs
\begin{align}
\notag%
\mathbf{X}^{n_0}_{i,\jj}\left(t\right)
=&\mathbf{X}^{n_0}_{i,\jj}\left(0\right)
	+ \int_0^t \mathbf{b}(\mathbf{X}^{n_0}_{i,\jj}(s), \nabla\varrho(t,\mathbf{X}^{n_0}_{i,\jj}(s)))\,ds
	+ \sigma\int_0^t \II_{\mathbb{R}^d}\left(\mathbf{X}^{n_0}_{i,\jj}(s)\right)\, dW_{i,\jj}(s),\\
&+
\int_0^t \chi^{n_0}_{\rm b}(\mathbf{X}^{n_0}_{i,\jj^\smalleftarrow}(s^-),z) \mathcal{N}_{i,\jj^\smalleftarrow}(ds,dz)
+
\int_0^t \chi^{n_0}_{\rm d}(\mathbf{X}^{n_0}_{i,\jj}(s^-),z) \mathcal{N}_{i,\jj}(ds,dz)
\end{align}
for $i\in\{1,\dots,n_0\}$, $\jj\in\JJ$
with
\begin{align*}
\chi^{n_0}_{\rm b}(\mathbf{x},z)=&
\begin{cases}
-\phi+\mathbf{x}, 
\text{ if }\mathbf{x}\neq\phi,\;
z \le \lambda_{\text{b}}(\mathbf{x},\varrho^{n_0}(\mathbf{x})),
\\
0,\text{ otherwise, }
\end{cases}
\\
\chi^{n_0}_{\rm d}(\mathbf{x},z)=&
\begin{cases}
\phi, \text{ if }\mathbf{x}\neq\phi,\;
z \le \lambda_{\text{b}}(\mathbf{x},\varrho^{n_0}(\mathbf{x}))
		+\lambda_{\text{d}}(\mathbf{x},\varrho^{n_0}(\mathbf{x})),
\\
0,\text{ otherwise, }
\end{cases}
\end{align*}
where we use a convention that $\phi-\phi=0\in\mathbb{R}^d$ 
and $\mathbf{x}+\phi=\phi$ 
for any $\mathbf{x}\in\mathbb{R}^d$
and $\mathbf{b}(\phi,x)=0$ for $x\in\mathbb{R}$.

In a similar way, to fully describe the branching diffusion of the hybrid model 
we add birth and death events to the equation \eqref{eqn:hybridX} obtaining
\begin{align}
\label{eqn:hybridX+bd}
\mathbf{\bar X}_{\jj}\left(t\right)
=&\mathbf{\bar X}_{\jj}\left(0\right)
	+ \int_0^t \mathbf{b}(\mathbf{\bar X}_{\jj}(s), \nabla\varrho(t,\mathbf{\bar X}_{\jj}(s)))\,ds
	+ \sigma\int_0^t \II_{\mathbb{R}^d}\left(\mathbf{\bar X}_{\jj}(s)\right)\, dW_{1,\jj}(s),
\\
\notag%
&+
\int_0^t \chi_{\rm b}(\mathbf{\bar X}_{\jj^\smalleftarrow}(s^-),z) \mathcal{N}_{1,\jj^\smalleftarrow}(ds,dz)
+
\int_0^t \chi_{\rm d}(\mathbf{\bar X}_{\jj}(s^-),z) \mathcal{N}_{1,\jj}(ds,dz),
\end{align}
for $\jj\in\JJ$, where $\chi_{\rm b}$ and $\chi_{\rm d}$ are defined like $\chi^{n_0}_{\rm b}$ and $\chi^{n_0}_{\rm d}$
with $\varrho ^{n_0}$ replaced by $\varrho $.

The second component of the microscopic process is the solution to the PDE \eqref{eqn:IBMrhon}.
We use here a semigroup notation, so the solution 
will be denoted as  $\varrho^{n_0}_t(\mathbf{x})$ instead of $\varrho^{n_0}(t,\mathbf{x})$, where
$\rho^{n_0}_t\in C^2_b(\mathbb{R}^d)$, 
and the space of trajectories will be $C_{C^2_b(\mathbb{R}^d)}{[0,\infty)}$ --- 
the space of continuous functions from $[0,\infty)$
to the space of bounded twice continuously differentiable functions with bounded second derivatives $C^2_b(\mathbb{R}^d)$.

Let $S_t$ be a semigroup generated by the operator $Af(\mathbf{x})=D\Delta f(\mathbf{x}) - r\,f(\mathbf{x})$, $f\in C^2_b(\mathbb{R}^d)$. 
Then we can rewrite equation \eqref{eqn:IBMrhon} as
\begin{equation}
\label{eqn:rhon0trewritten}
\varrho^{n_0}_t(\mathbf{x}) 
	= S_t \varrho_0(\mathbf{x}) 
	+ \alpha\int_0^t S_{t-s}[K * \xi^{n_0}_s](\mathbf{x})\,ds
\end{equation}
and, analogously, equation \eqref{eqn:hybridrho} as
\begin{equation}
\label{eqn:rhotrewritten}
\varrho_t(\mathbf{x}) 
	= S_t \varrho_0(\mathbf{x}) 
	+ \alpha\int_0^t S_{t-s}[K * \bar\mu_s](\mathbf{x})\,ds,
\end{equation}
and similarly \eqref{eqn:XMrhoR} with $\bar\mu_s$ replaced by $\mu_s$.

We will use the following assumptions:
\begin{ass}
\label{ass:const}
let $\sigma$, $D$, $r$, and $\alpha$ be positive constants.
\end{ass}
\begin{ass}
\label{ass:Fl}
let $\lambda_{\text{b}},\lambda_{\text{d}}\in C^1_b\left(\mathbb{R}^d\times \mathbb{R}_+\right)$
be nonnegative functions and  
$\lambda_{\text{b}}+\lambda_{\text{d}}<\bar\lambda$ for some constant $\bar\lambda>0 $;
let $\mathbf{b}:\mathbb{R}^d\times \mathbb{R}\to\mathbb{R}$ be bounded and boundedly differentiable;
let $\kappa\in C^2_b\left(\mathbb{R}^d\right)$ such that
$\int_{\mathbb{R}^d} \kappa(\mathbf{x})d\mathbf{x}=1$
and $\Delta \kappa$ is Lipschitz and let $L_{\kappa}$ be maximum of Lipschitz coefficients for $\kappa$ and $\Delta \kappa$.
\end{ass}

Now we can state the well-posedness theorems. Their proofs will be given in section \ref{sec:proofs-wp}.
\begin{thm}
\label{thm:well-p-IBM}%
Suppose that $\varrho_0\in C^2_b(\mathbb{R}^d)$ and 
$\mathbf{X}^{n_0}_{i,\emptyset}(0)$ for $i=1,\dots,n_0$
are independent random variables with probability law $\mu_0\in\mathcal{P}(\mathbb{R}^d)$.
Let assumptions \ref{ass:WN}-\ref{ass:Fl} be satisfied. 
For any $T>0$ there exists a process 
$(\mathbbm{x}^{n_0},\varrho^{n_0})=
\left(\bigl(\mathbf{X}^{n_0}_{i,\jj}\bigr)_{i\in\{1,\dots,n_0\},\jj\in\JJ},\varrho^{n_0}\right)
\in D_{\mathbb{X}}{[0,T]}\times C_{C^2_b(\mathbb{R}^d)}{[0,T]}$
described by \eqref{eqn:IBMXn} and \eqref{eqn:IBMrhon} with population dynamics 
given by \eqref{eqn:sigmadef} and \eqref{eqn:taudef} 
and it is pathwise uniquely defined.
\end{thm}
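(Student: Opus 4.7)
The plan is to build the process iteratively on the successive intervals between jump events. The first step is to bound the population so that, almost surely, only finitely many branching or death events occur on $[0,T]$. Under Assumption \ref{ass:Fl} the total rate $\lambda_{\text{b}}+\lambda_{\text{d}}$ is bounded by $\bar\lambda$, so the total number of alive cells
\[
N^{n_0}(t)=\sum_{i,\jj}\II_{[\tau^{n_0}_{i,\jj},\sigma^{n_0}_{i,\jj})}(t)
\]
is stochastically dominated by a Yule process of rate $\bar\lambda$ started from $n_0$. This yields $\E N^{n_0}(t)\le n_0 e^{\bar\lambda t}$ and shows that the ordered jump times $0=T_0<T_1<T_2<\dots$ do not accumulate on $[0,T]$.

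On each interval $[T_k,T_{k+1})$ the set $\mathcal{I}_k$ of alive cells and their labels are frozen, and the problem reduces to solving simultaneously the $|\mathcal{I}_k|$ SDEs \eqref{eqn:IBMXn} and the mild formulation \eqref{eqn:rhon0trewritten} of the chemoreactant PDE. I would run a Banach fixed point in
\[
C\bigl([T_k,T_k+h];(\mathbb{R}^d)^{|\mathcal{I}_k|}\bigr)\times C_{C^2_b(\mathbb{R}^d)}[T_k,T_k+h]
\]
for a short $h$: given $\varrho^{n_0}$, the drift $\mathbf{b}(\cdot,\nabla\varrho^{n_0}_s(\cdot))$ is bounded and Lipschitz in the spatial variable by Assumption \ref{ass:Fl} together with $\varrho^{n_0}_s\in C^2_b$, so the $\mathbf{X}^{n_0}_{i,\jj}$ are uniquely defined by standard Lipschitz SDE theory; conversely, given the $\mathbf{X}^{n_0}_{i,\jj}$, the source $\kappa*\xi^{n_0}_s$ lies in $C^2_b$ with Lipschitz $\Delta$ thanks to the assumed regularity of $\kappa$, and $S_t$ preserves $C^2_b(\mathbb{R}^d)$, so \eqref{eqn:rhon0trewritten} returns an element of $C_{C^2_b}[T_k,T_k+h]$. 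Choosing $h$ depending only on the Lipschitz constants of $\mathbf{b}$, $\kappa$, and the semigroup estimates of $S_t$ makes the composition a contraction. Since the a priori bounds on $|\mathbf{X}^{n_0}_{i,\jj}|$ and on $\|\varrho^{n_0}_t\|_{C^2_b}$ do not blow up in finite time, iteration extends the solution to all of $[T_k,T_{k+1})$.

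The jump time $T_{k+1}$ is then defined as the smallest $t>T_k$ such that for some $(i,\jj)\in\mathcal{I}_k$,
\[
\mathcal{N}_{i,\jj}\Bigl((T_k,t]\times\bigl[0,\lambda_{\text{b}}(\mathbf{X}^{n_0}_{i,\jj}(s),\varrho^{n_0}(s,\mathbf{X}^{n_0}_{i,\jj}(s)))+\lambda_{\text{d}}(\mathbf{X}^{n_0}_{i,\jj}(s),\varrho^{n_0}(s,\mathbf{X}^{n_0}_{i,\jj}(s)))\bigr]\Bigr)\ge 1.
\]
This is a measurable functional of the already constructed trajectory and of the $\mathcal{N}_{i,\jj}$; the $z$-coordinate of the Poisson atom tells us, through $\chi^{n_0}_{\rm b}$ and $\chi^{n_0}_{\rm d}$, whether the event is a birth (producing the children $\jj 0$, $\jj 1$ at the same location) or a death, which updates $\mathcal{I}_{k+1}$. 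By Step 1 this iteration reaches $T$ in finitely many steps a.s. For pathwise uniqueness, two solutions driven by the same $W_{i,\jj}$ and $\mathcal{N}_{i,\jj}$ agree on $[0,T_1)$ by the uniqueness part of the fixed point; hence their jump times $T_1$ coincide, their post-jump configurations coincide, and an induction over jump events yields uniqueness on $[0,T]$.

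The main obstacle is the fixed-point step: one has to close a loop in which the cell positions drive the PDE and the PDE in turn drives the drift of the cells. What makes this work is the regularity built into the model — the mollifier $\kappa\in C^2_b$ with Lipschitz $\Delta\kappa$ together with the smoothing of $S_t$ keeps $\varrho^{n_0}_s$ in $C^2_b$ with a spatially Lipschitz gradient, which is precisely what is needed to make $\mathbf{b}(\mathbf{x},\nabla\varrho^{n_0}_s(\mathbf{x}))$ Lipschitz in $\mathbf{x}$ and the whole SDE-PDE system amenable to a Picard argument.
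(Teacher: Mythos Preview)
Your proposal is correct and follows the same strategy as the paper: control the number of events via comparison with a Yule process, then build the solution piecewise between events by a Picard-type fixed-point argument on the coupled cell--chemoreactant system, and conclude pathwise uniqueness by induction over events. The paper's only organisational difference is that it fixes $\omega$ first and enumerates \emph{all} atoms of the finitely many relevant $\mathcal{N}_{i,\jj}$ in $[0,T]\times[0,\bar\lambda]$ up front, so the partition times $t_k$ are a fixed deterministic list and the between-jump problem is purely deterministic; you instead recover each $T_{k+1}$ from the trajectory just constructed, which is equivalent but needs the (correct) remark that your contraction step $h$ is uniform.
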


\begin{thm}
\label{thm:well-p-hybrid}%
Suppose that $\varrho_0\in C^2_b(\mathbb{R}^d)$ and 
$\bar{\mathbf{X}}_{\emptyset}(0)$ is a random variable with probability law $\mu_0\in\mathcal{P}(\mathbb{R}^d)$.
Let assumptions \ref{ass:WN}-\ref{ass:Fl} be satisfied. 
For any $T>0$ there exists a unique function $\varrho\in C_{C^2_b(\mathbb{R}^d)}{[0,T]}$
and the hybrid mean-field process 
$\mathbbm{\bar x}=\bigl(\mathbf{\bar X}_{\jj}\bigr)_{\jj\in\JJ}$ with trajectories in $D_{\mathbb{X}}[0,T]$
described by \eqref{eqn:hybrid} with \eqref{eqn:hybridX+bd} and it is pathwise uniquely defined.
\end{thm}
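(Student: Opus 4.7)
The plan is to reduce the theorem to a Banach fixed-point problem for the chemoreactant field $\varrho$, exploiting the equivalence $\mu_t=\bar\mu_t$ advertised in the remark following \eqref{eqn:def-mu}. This identity is a many-to-one formula: for fixed $\varrho$, both $\mu_t$ and $\bar\mu_t$ satisfy, in weak form, the linear Kolmogorov-type evolution
\begin{equation*}
\partial_t \langle \nu_t,f\rangle = \langle \nu_t,\tfrac{\sigma^2}{2}\Delta f+\mathbf{b}(\cdot,\nabla\varrho)\cdot\nabla f + \lambda(\cdot,\varrho)f\rangle,\qquad f\in C^2_b(\mathbb{R}^d),
\end{equation*}
with the same initial datum $\mu_0$, so they coincide by uniqueness for this linear PDE. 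Consequently, the PDE \eqref{eqn:rhotrewritten} is driven by the mass-weighted law of a single McKean--Vlasov SDE \eqref{eqn:XMrhoX}--\eqref{eqn:XMrhoM}, which is substantially easier to handle than the branching tree.

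Given a candidate $\varrho\in C_{C^2_b(\mathbb{R}^d)}[0,T]$, the coefficient $\mathbf{x}\mapsto\mathbf{b}(\mathbf{x},\nabla\varrho(t,\mathbf{x}))$ is globally Lipschitz (uniformly in $t\in[0,T]$) by Assumption \ref{ass:Fl} and the $C^2_b$-regularity of $\varrho$, so \eqref{eqn:XMrhoX} has a pathwise unique strong solution $X$; the multiplicative ODE \eqref{eqn:XMrhoM} then produces a unique $M(t)=\exp\!\bigl(\int_0^t\lambda(X(s),\varrho(s,X(s)))\,ds\bigr)$ with $0\le M(t)\le e^{\bar\lambda t}$. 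Having built $(X,M)$, we define $\mu_t$ by \eqref{eqn:def-mu}, which is a finite measure of total mass at most $e^{\bar\lambda T}$. The branching tree $(\bar{\mathbf{X}}_\jj)_\jj$ can be constructed separately, generation by generation, from the same $\varrho$ and the Poisson clocks $\mathcal{N}_{1,\jj}$: boundedness of $\lambda_{\text{b}}+\lambda_{\text{d}}$ by $\bar\lambda$ bounds the number of generations up to time $T$ and rules out accumulation, and each interbirth segment solves a classical SDE.

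Define $\Phi(\varrho)_t(\mathbf{x})=S_t\varrho_0(\mathbf{x})+\alpha\int_0^tS_{t-s}[\kappa*\mu_s](\mathbf{x})\,ds$. Because $\kappa\in C^2_b$ with $\Delta\kappa$ Lipschitz and $\mu_s$ has uniformly bounded total mass, $\kappa*\mu_s$ lies in $C^2_b$ with bounds depending only on $\kappa$ and $e^{\bar\lambda T}$; since $S_t$ preserves $C^2_b$ (it is the heat semigroup tensored with a multiplicative decay), $\Phi$ maps $C_{C^2_b(\mathbb{R}^d)}[0,T_1]$ into itself. For the contraction, take two candidates $\varrho^1,\varrho^2$, couple the associated $(X^i,M^i)$ by the same Brownian motion, and run a standard Gronwall estimate:
\begin{equation*}
\sup_{s\le t}\E|X^1_s-X^2_s|+\sup_{s\le t}|M^1_s-M^2_s|\le C(T)\,t\,\sup_{s\le t}\|\varrho^1_s-\varrho^2_s\|_{C^1_b}.
\end{equation*}
Convolving with $\kappa$ (and with $\nabla\kappa$, $\Delta\kappa$, using the Lipschitz bound $L_\kappa$) converts these $L^1$-estimates into $C^2_b$-estimates on $\kappa*\mu^1_s-\kappa*\mu^2_s$, and after integration in $s$ one obtains $\|\Phi(\varrho^1)-\Phi(\varrho^2)\|\le C(T_1)\,T_1\,\|\varrho^1-\varrho^2\|$, giving a contraction on a time interval $[0,T_1]$ depending only on the constants. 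Because the mass bound $e^{\bar\lambda T}$ is global, $T_1$ does not shrink along the iteration, so one extends the solution to $[0,T]$ by concatenation.

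The main obstacle is closing the loop at the $C^2_b$ level: the field $\varrho$ enters the motion equation through $\nabla\varrho$ and the mass equation through $\varrho$ itself, so both the drift and the growth rate vary simultaneously with the candidate, and to produce a $C^2_b$-output from $\Phi$ one must transfer Lipschitz control of $(X,M)$ in $\varrho$ into Lipschitz control of $\mu_s$, and then into derivative bounds for $\kappa*\mu_s$ and for $\nabla^2(\kappa*\mu_s)$. This is precisely where the hypothesis that $\Delta\kappa$ is Lipschitz is used; without it one would only obtain $C^1$-regularity for the fixed point, which is insufficient to restart the iteration.
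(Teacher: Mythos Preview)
Your proposal is correct and follows essentially the same route as the paper: reduce to a fixed-point problem for $\varrho$ via the second hybrid model \eqref{eqn:XMrho}, invoking the many-to-one identity $\mu_t=\bar\mu_t$ (both satisfying the same linear weak equation \eqref{eqn:weekfor<barmu>}), and construct the branching tree separately from the given $\varrho$. The only technical differences are that the paper packages the fixed-point argument into Theorem~\ref{thm:well-p-XM} and uses a Bielecki norm $\vertiii{\varrho}_\gamma=\max_{t}e^{-\gamma t}(\|\varrho_t\|_\infty+\|\nabla\varrho_t\|_\infty)$ on a set with a priori Lipschitz bounds on $\varrho_t,\nabla\varrho_t$ (so the contraction is measured only in $C^1$ and no concatenation is needed), whereas you contract in the full $C^2_b$-norm on a short interval and iterate.
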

For completeness we state also the existence-uniqueness theorem for the macroscopic model. 
We do not present its proof, which is straightforward thanks to the regularization by $\kappa$. 
It goes by simple fixed point argument.
\begin{thm}
\label{thm:well-p-macro}%
Let assumptions \ref{ass:WN}-\ref{ass:Fl} be satisfied. 
If $\varrho_0$ and $p_0$ are in $C^2_b(\mathbb{R}^d)$, then
there exists a unique classical solution to the system \eqref{eqn:macro}.
\end{thm}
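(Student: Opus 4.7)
The plan is to run a Picard fixed point argument on the mild form of the coupled system, treating $\varrho$ as the unknown and $p$ as a function of $\varrho$. Recall the semigroup $S_t$ generated by $D\Delta-r$ used in \eqref{eqn:rhon0trewritten}, and let $T_t$ denote the heat semigroup generated by $\tfrac{1}{2}\Delta$ on $C^2_b(\mathbb{R}^d)$. For given $\varrho\in C([0,T];C^2_b(\mathbb{R}^d))$, Assumption \ref{ass:Fl} ensures that $\mathbf{b}(\cdot,\nabla\varrho(t,\cdot))$ and $\lambda(\cdot,\nabla\varrho(t,\cdot))$ lie in $C^1_b(\mathbb{R}^d)$ uniformly in $t$, so \eqref{eqn:macrop} becomes a linear parabolic equation with bounded $C^1_b$ coefficients. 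Standard theory (Picard iteration of its own Duhamel formula against $T_t$, or equivalently a Feynman--Kac representation) produces a unique classical solution $p=p[\varrho]\in C([0,T];C^2_b(\mathbb{R}^d))$. Inserting $p[\varrho]$ into the mild form of \eqref{eqn:macrorho} defines the map
\begin{equation*}
\Phi(\varrho)_t := S_t\varrho_0 + \alpha\int_0^t S_{t-s}\bigl[\kappa* p[\varrho]_s\bigr]\,ds.
\end{equation*}

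The next step is to verify that $\Phi$ stabilises a closed ball in $C([0,T];C^2_b(\mathbb{R}^d))$ and is a contraction there for small $T$. Because $\kappa\in C^2_b$ with $\Delta\kappa$ Lipschitz, the convolution $q\mapsto\kappa* q$ maps $L^\infty$ into $C^2_b$ with norm controlled by $L_\kappa$, while $\|p[\varrho]_t\|_{L^\infty}$ grows at most like $e^{\bar\lambda t}$ from the linear equation for $p$, and $S_t$ preserves $C^2_b$; combined, this gives ball-preservation. For the contraction estimate, writing
\begin{equation*}
\Phi(\varrho^1)_t - \Phi(\varrho^2)_t = \alpha\int_0^t S_{t-s}\bigl[\kappa*(p[\varrho^1]_s - p[\varrho^2]_s)\bigr]\,ds
\end{equation*}
and subtracting the two linear equations for $p[\varrho^i]$ using the Lipschitzness of $\mathbf{b}$ and $\lambda$ in their second variable (Assumption \ref{ass:Fl}) yields $\|p[\varrho^1]-p[\varrho^2]\|_{C([0,T];C^2_b)} \le C(T)\|\varrho^1-\varrho^2\|_{C([0,T];C^2_b)}$ by Gronwall. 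Composed with the $\kappa$-smoothing and the factor $T$ gained from the time integral, this produces a bound of the form $C'(T)\cdot T$ on the Lipschitz constant of $\Phi$, hence a strict contraction for $T$ small and a local fixed point $\varrho$, from which $p=p[\varrho]$ recovers the full classical solution of \eqref{eqn:macro}.

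Extension to arbitrary $T>0$ follows because the coefficients $\mathbf{b},\lambda_b,\lambda_d$ are bounded uniformly in their second argument, so the constants $C(T),C'(T)$ above remain bounded on bounded time intervals: the local step size is uniform and the construction iterates forward without shrinking. Uniqueness globally follows from the same Lipschitz estimates applied to the difference of any two solutions together with Gronwall. The one place that requires genuine (but standard) work is the linear building block: for fixed $\varrho$, obtaining $p[\varrho]\in C([0,T];C^2_b)$ together with a Lipschitz-in-$\varrho$ bound in the same norm means tracking two spatial derivatives through the Duhamel representation of a parabolic equation with first-order drift, which is routine once the coefficients are seen to be $C^1_b$ but is the only slightly technical step. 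Note that the mollifier $\kappa$ plays no role there; its smoothing is used only when returning from $p$ to $\varrho$, where it lets the fixed point close in $C^2_b$ rather than a weaker space --- this is precisely the ``straightforwardness'' alluded to in the remark preceding the theorem.
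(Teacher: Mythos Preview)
The paper does not actually give a proof of this theorem: it states only that the argument ``is straightforward thanks to the regularization by $\kappa$'' and ``goes by simple fixed point argument.'' Your proposal is precisely such a fixed-point scheme --- treating $\varrho$ as the unknown, solving the linear $p$-equation for each candidate $\varrho$, and closing the loop via the $\kappa$-smoothed mild formulation --- so it matches the approach the paper alludes to, and the key role you assign to $\kappa$ (recovering $C^2_b$ regularity on the $\varrho$-side regardless of how much regularity one tracks for $p$) is exactly the point the paper highlights.
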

Now, we have the wellposedness theorem for the second mean-field model.
\begin{thm}
\label{thm:well-p-XM}
Let assumptions \ref{ass:const}-\ref{ass:Fl} be satisfied and $W$ be 
a $d$-dimensional standard Wiener process. 
Suppose that $\varrho(0,\cdot)=\varrho_0\in C^2_b(\mathbb{R}^d)$,
${\mathbf{X}}(0)$ is a random variable with probability law $\mu_0\in\mathcal{P}(\mathbb{R}^d)$
and $M(0)=1$.
Then system \eqref{eqn:XMrho}-\eqref{eqn:def-mu} has a unique solution.
\end{thm}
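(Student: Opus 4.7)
The plan is to solve system \eqref{eqn:XMrho}--\eqref{eqn:def-mu} by a Picard-type fixed-point argument on the chemoreactant field. I would work in the Banach space $\mathcal{E}_T = C_{C^2_b(\mathbb{R}^d)}[0,T]$ with norm $\|\varrho\|_T = \sup_{t\le T}\|\varrho_t\|_{C^2_b}$, and define a map $\Phi$ as follows. Given $\varrho \in \mathcal{E}_T$, the drift $\mathbf{x}\mapsto \mathbf{b}(\mathbf{x},\nabla\varrho(t,\mathbf{x}))$ is bounded and globally Lipschitz in $\mathbf{x}$, uniformly in $t$, because $\mathbf{b}$ is $C^1_b$ and $\nabla\varrho(t,\cdot)$ is Lipschitz with constant at most $\|\varrho\|_T$. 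Standard SDE theory then gives a unique strong solution $\mathbf{X}^\varrho$ to \eqref{eqn:XMrhoX}. Equation \eqref{eqn:XMrhoM}, read as $dM/dt = \lambda(\mathbf{X}^\varrho(t),\varrho(t,\mathbf{X}^\varrho(t)))M(t)$, is a pathwise linear ODE with explicit solution $M^\varrho(t)=\exp\bigl(\int_0^t \lambda(\mathbf{X}^\varrho(s),\varrho(s,\mathbf{X}^\varrho(s)))\,ds\bigr)$, uniformly bounded by $e^{\bar\lambda T}$ thanks to assumption \ref{ass:Fl}. Thus the measure $\mu^\varrho_t$ defined by \eqref{eqn:def-mu} has total mass at most $e^{\bar\lambda T}$, and because $\kappa \in C^2_b$ with Lipschitz $\Delta \kappa$, the convolution $\kappa*\mu^\varrho_s$ lies in $C^2_b$ with norm controlled by $\|\kappa\|_{C^2_b}$, $L_\kappa$ and $e^{\bar\lambda T}$. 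Setting $\Phi(\varrho)_t = S_t\varrho_0 + \alpha\int_0^t S_{t-s}[\kappa*\mu^\varrho_s]\,ds$ and using that $S_t$ acts continuously on $C^2_b$, the map $\Phi$ sends $\mathcal{E}_T$ into itself.

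To obtain existence and uniqueness I would show $\Phi$ is a contraction on a short interval and then iterate. Given $\varrho^1,\varrho^2\in\mathcal{E}_T$, Lipschitz dependence of the SDE drift on $\varrho$ and Gronwall's inequality give
\begin{equation*}
\E\sup_{s\le t}|\mathbf{X}^{\varrho^1}(s)-\mathbf{X}^{\varrho^2}(s)| \le C(T)\|\varrho^1-\varrho^2\|_T ,
\end{equation*}
and the exponential representation of $M$, together with Lipschitz continuity of $\lambda$, yields a comparable bound for $\sup_{s\le t}|M^{\varrho^1}(s)-M^{\varrho^2}(s)|$. Writing $(\kappa*\mu^{\varrho^i}_s)(\mathbf{x})=\E[M^{\varrho^i}(s)\kappa(\mathbf{x}-\mathbf{X}^{\varrho^i}(s))]$ and moving derivatives onto $\kappa,\nabla\kappa,\Delta\kappa$, the two preceding estimates give
\begin{equation*}
\sup_{s\le t}\|\kappa*\mu^{\varrho^1}_s-\kappa*\mu^{\varrho^2}_s\|_{C^2_b} \le C(T)\|\varrho^1-\varrho^2\|_T ,
\end{equation*}
with a constant involving $L_\kappa$ and $e^{\bar\lambda T}$. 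Applying the semigroup and integrating then produces $\|\Phi(\varrho^1)-\Phi(\varrho^2)\|_T \le C(T)T\,\|\varrho^1-\varrho^2\|_T$, which is a strict contraction for $T$ small enough. Since the constants depend only on the data and a uniform a priori bound on $\|\varrho\|_T$, the unique fixed point can be extended to any finite $T$ by a standard gluing argument.

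The main obstacle I anticipate is pushing the contraction estimate up to the full $C^2_b$ norm of $\kappa*\mu^{\varrho^1}_s-\kappa*\mu^{\varrho^2}_s$, since only first-order Lipschitz information on $\mathbf{X}^{\varrho}$ in $\varrho$ is available. The key observation is that in \eqref{eqn:def-mu} the particle enters only through the test function $\kappa(\mathbf{x}-\cdot)$, so $\mathbf{x}$-derivatives are converted into derivatives of $\kappa$ placed outside the expectation; the Lipschitz continuity of $\Delta\kappa$ hypothesized in assumption \ref{ass:Fl} is precisely what makes the second-order estimate close. A separate small verification is the identity $\mu_t=\bar\mu_t$ stated in the remark after \eqref{eqn:def-mu}; this can be justified either a posteriori from the uniqueness in Theorem \ref{thm:well-p-hybrid}, or directly via a Feynman--Kac-type computation on the branching tree that expresses $\bar\mu_t(A)$ as $\E[M(t)\II_A(\mathbf{X}(t))]$ for the single trajectory satisfying \eqref{eqn:XMrhoX}--\eqref{eqn:XMrhoM}.
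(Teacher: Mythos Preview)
Your proposal is correct and follows the same fixed-point scheme on the chemoreactant field as the paper. Two technical choices differ. First, the paper equips the space with a Bielecki (exponentially weighted) norm
$\vertiii{\varrho}_\gamma=\max_{t\in[0,T]}e^{-\gamma t}(\|\varrho_t\|_\infty+\|\nabla\varrho_t\|_\infty)$
and shows the map is a contraction for $\gamma$ large, which gives existence on the whole interval $[0,T]$ in one stroke and avoids your short-time plus gluing step. Second, the paper works only with the $C^1$-type seminorm $\|\varrho_t\|_\infty+\|\nabla\varrho_t\|_\infty$, restricting to the invariant subset $E=\{\varrho:\operatorname{Lip}\varrho_t,\operatorname{Lip}\nabla\varrho_t\le L\}$ with $L$ fixed by $\kappa$ and $\varrho_0$; this makes the Gronwall constants uniform from the outset, whereas in your $C^2_b$ setup you must appeal to the a~priori bound $\|\Phi(\varrho)\|_T\le\|S_\cdot\varrho_0\|+\alpha T\|\kappa\|_{C^2_b}e^{\bar\lambda T}$ to keep the iteration honest. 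Both routes are standard and equivalent; the paper's is slightly more economical. Your closing remark on $\mu_t=\bar\mu_t$ is not part of this theorem's proof in the paper---it appears instead in the proof of Theorem~\ref{thm:well-p-hybrid}, via the weak formulation \eqref{eqn:weekfor<barmu>}.
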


\section{Convergence results}
\label{sec:conv}
The approach to the convergence of particle systems is based on the propagation of chaos results
from \cite{BudhirajaFan2017} and \cite{Sznitman1991,CattiauxGuillinMalrieu2008},
namely, 
the proof of Theorem \ref{thm:conv}  is based on the fact that processes for all $n_0$ and the limit are coupled by use of the same 
probability space and the same processes 
$\left(W_{i,\jj}\right)_{i\in\mathbb{N},\jj\in\JJ}$ and $\left(\mathcal{N}_{i,\jj}\right)_{i\in\mathbb{N},\jj\in\JJ}$.
Since the processes have not continuous trajectories it demands more delicate approach.
We will use the following notation:
\begin{itemize}
\item denote by
$\mathbbm{x}^{n_0}(t)={\bigl(\mathbf{X}^{n_0}_{i,\jj}(t)\bigr)_{i\in\{1,\dots,n_0\},\jj\in\JJ}}$, 
coupled with $\varrho^{n_0}_t$, 
$t\in[0,T]$, 
the solution of microscopic model defined by \eqref{eqn:sigmadef}-\eqref{eqn:xi_n}.
\item let 
$\mathbbm{x}^{n_0}_1(t)={\bigl(\mathbf{\bar X}^{n_0}_{1,\jj}(t)\bigr)_{\jj\in\JJ}}$, $t\in[0,T]$ 
denote the branch of process $\mathbbm{x}^{n_0}$ starting from the first cell $\mathbf{\bar X}^{n_0}_{1,\jj}(0)$
driven by processes 
$\left(W_{1,\jj}\right)_{\jj\in\JJ}$ and $\left(\mathcal{N}_{1,\jj}\right)_{\jj\in\JJ}$.
\item now 
we define a mean field processes 
$\bar{\mathbbm{x}}(t)={\bigl(\mathbf{\bar X}_{\jj}(t)\bigr)_{\jj\in\JJ}}$ and  
$\varrho_t$, $t\in[0,T]$, 
defined by 
\eqref{eqn:hybridX+bd}-\eqref{eqn:hybridrho}
driven by processes 
$\left(W_{1,\jj}\right)_{\jj\in\JJ}$ and $\left(\mathcal{N}_{1,\jj}\right)_{\jj\in\JJ}$.
\item $\xi^{n_0}_t$ is a process given by \eqref{eqn:xi_n} which can be written as
\begin{equation*}
\xi^{n_0}_t = \frac{1}{n_0}\sum_{i=1}^{n_0}\sum_{\jj\in\JJ}
			\II_{\mathbb{R}^d}(X^{n_0}_{i,\jj}(t))\delta_{X^{n_0}_{i,\jj}(t)}.
\end{equation*}
\end{itemize}
Now we can state two convergence theorems.
\begin{thm}
\label{thm:convinD}
Let assumptions \ref{ass:WN}-\ref{ass:Fl} be satisfied. 
The for each $T>0$
\begin{enumerate}
\renewcommand{\labelenumi}{\textbf{\theenumi}}
\renewcommand{\theenumi}{(\roman{enumi})}
\item the sequence of processes $\xi^{n_0}_t$ converges in distribution
to $\bar\mu_t$ defined by \eqref{eqn:barmu} on $D_{\mathcal{M}}[0,T]$ with Skorokhod topology. 
The space $\mathcal{M}$ is considered here with a topology of vague convergence.
\item the sequence of processes $\varrho^{n_0}_t$ converges to
$\varrho_t$ given by \eqref{eqn:rhotrewritten} in distribution on $C_{C_b(\mathbb{R}^d)}[0,T]$.
\end{enumerate}
\end{thm}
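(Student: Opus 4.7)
The plan is to establish (i) by a coupling propagation-of-chaos argument relating $\mathbbm{x}^{n_0}$ to $n_0$ independent copies of the hybrid mean-field process, and then to deduce (ii) from (i) using the semigroup identities \eqref{eqn:rhon0trewritten}--\eqref{eqn:rhotrewritten}. For each $i\in\{1,\dots,n_0\}$ I would introduce an independent copy $\bar{\mathbbm{x}}^{(i)}=(\bar X^{(i)}_\jj)_{\jj\in\JJ}$ of $\bar{\mathbbm{x}}$, driven by the same $W_{i,\jj}$ and $\mathcal{N}_{i,\jj}$ as the $i$-th cell line of $\mathbbm{x}^{n_0}$ but using the deterministic limit field $\varrho$ from Theorem \ref{thm:well-p-hybrid}. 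Since these noises are i.i.d.\ in $i$ and $\varrho$ is deterministic, the $\bar{\mathbbm{x}}^{(i)}$ are i.i.d.\ copies of $\bar{\mathbbm{x}}$. Setting $\eta^{n_0}_t=\frac{1}{n_0}\sum_{i=1}^{n_0}\bar\xi^{(i)}_t$, and using that Assumption \ref{ass:Fl} dominates $\bar\xi^{(i)}_t(\mathbb{R}^d)$ by a Yule process of rate $\bar\lambda$ (hence uniformly exponentially integrable on $[0,T]$), the classical LLN for i.i.d.\ $\mathcal{M}$-valued random variables together with an Aldous-type tightness criterion applied to $\langle\eta^{n_0}_\cdot,f\rangle$ for $f\in C_c(\mathbb{R}^d)$ gives $\eta^{n_0}\to\bar\mu$ in probability in $D_\mathcal{M}[0,T]$ with the vague topology.

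The heart of the proof is the coupling estimate. For each $i,\jj$ set
\[
\Delta^{n_0}_{i,\jj}(t) = \bigl|\mathbf{X}^{n_0}_{i,\jj}(t) - \bar X^{(i)}_\jj(t)\bigr|,
\]
where the convention $|\mathbf{x}-\phi|=1$ also records desynchronized birth/death events. Because the two processes share $W_{i,\jj}$ and $\mathcal{N}_{i,\jj}$, and because $\mathbf{b},\lambda_{\mathrm{b}},\lambda_{\mathrm{d}}$ are Lipschitz (Assumption \ref{ass:Fl}), an It\^o--Tanaka estimate on the diffusion part and a pathwise comparison of the firing strips of $\mathcal{N}_{i,\jj}$ yield
\[
\E\sup_{s\le t}\Delta^{n_0}_{i,\jj}(s) \le C\int_0^t\Bigl(\E\sup_{u\le s}\Delta^{n_0}_{i,\jj}(u) + \E\sup_{u\le s}\|\varrho^{n_0}_u-\varrho_u\|_{C^1_b}\Bigr)ds.
\]
In parallel, \eqref{eqn:rhon0trewritten}--\eqref{eqn:rhotrewritten} give
\[
\varrho^{n_0}_t-\varrho_t = \alpha\!\int_0^t\! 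S_{t-s}[\kappa*(\xi^{n_0}_s-\eta^{n_0}_s)]\,ds + \alpha\!\int_0^t\! S_{t-s}[\kappa*(\eta^{n_0}_s-\bar\mu_s)]\,ds.
\]
Since $\kappa\in C^2_b$ and $S_t$ is bounded on $C^2_b(\mathbb{R}^d)$, the first term is dominated in $C^2_b$ by $C\cdot\frac{1}{n_0}\sum_i\sum_\jj\Delta^{n_0}_{i,\jj}$ (finite in expectation by Yule domination), while the second vanishes as $n_0\to\infty$ by the LLN above. Substituting and applying Gronwall's inequality to the coupled system for $(\Delta^{n_0}_{i,\jj},\|\varrho^{n_0}_\cdot-\varrho_\cdot\|_{C^2_b})$ drives both $\E\sup_{s\le T}\|\varrho^{n_0}_s-\varrho_s\|_{C^2_b}$ and $\E\sup_{s\le T}\Delta^{n_0}_{i,\jj}(s)$ to $0$. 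Assertion (ii) follows directly from this $C^2_b$ (a fortiori $C_b$) convergence; assertion (i) follows from the decomposition $\xi^{n_0}=\eta^{n_0}+(\xi^{n_0}-\eta^{n_0})$, where the first term converges to $\bar\mu$ in $D_\mathcal{M}[0,T]$ and the second vanishes when tested against any $f\in C_c(\mathbb{R}^d)$, inheriting Skorokhod tightness from $\eta^{n_0}$.

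The main obstacle is the jump part of the coupling. The Poisson integrals driving birth/death in the microscopic process and in $\bar{\mathbbm{x}}^{(i)}$ share the common clock $\mathcal{N}_{i,\jj}$, but they integrate over distinct strips whose widths depend on the respective chemoreactant fields at the respective particle positions, so a single atom can lie inside one strip but not the other. The distance $|\mathbf{x}-\phi|=1$ on the state space $\mathbb{X}$ translates each such one-sided firing into a bounded jump in $\Delta^{n_0}_{i,\jj}$ whose compensator is of order $\|\varrho^{n_0}-\varrho\|_\infty+\Delta^{n_0}_{i,\jj}$; this is exactly what is needed to close the Gronwall argument. Beyond this, care is needed in propagating the estimate down the genealogy, exploiting the independence of $\mathcal{N}_{1,\jj}$ across $\jj\in\JJ$ to prevent geometric blow-up of coupling errors with the depth of the tree.
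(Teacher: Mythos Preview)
Your approach is genuinely different from the paper's: the paper proves Theorem~\ref{thm:convinD} by a classical tightness-plus-identification argument (Aldous' criterion for $\langle\varphi_k,\xi^{n_0}\rangle$ via the It\^o decomposition, Ascoli-type compactness for $\varrho^{n_0}$, then uniqueness of the weak limit equation~\eqref{eqn:weekfor<barmu>}), and only afterwards uses the resulting convergence $\varrho^{n_0}\to\varrho$ as an \emph{input} to the pathwise coupling in Theorem~\ref{thm:conv}. You are attempting to do both simultaneously by closing a Gronwall loop between $\Delta^{n_0}_{i,\jj}$ and $\|\varrho^{n_0}-\varrho\|$.

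There is a real gap in your treatment of the jumps. Your compensator estimate correctly handles the \emph{first} one-sided firing: an atom of $\mathcal{N}_{i,\jj}$ falling in the gap between the two strips occurs at rate $O(\Delta^{n_0}_{i,\jj}+\|\varrho^{n_0}-\varrho\|)$ and produces a jump of size at most $1$. But once the genealogies desynchronize, the damage is not confined to index $\jj$. Suppose the microscopic parent $\jj$ branches at time $s_1$ while the mean-field parent does not; then $\mathbf{X}^{n_0}_{i,\jj0}$ starts evolving under the independent noise $W_{i,\jj0}$. If the mean-field parent later branches at $s_2>s_1$, the newborn $\bar X^{(i)}_{\jj0}(s_2)=\bar X^{(i)}_\jj(s_2^-)$ has been driven by $W_{i,\jj}$ on $[s_1,s_2]$, so $\Delta^{n_0}_{i,\jj0}$ jumps from $1$ to a value containing the independent Brownian increment $\sigma(W_{i,\jj0}(s_2)-W_{i,\jj0}(s_1))$, which is $O(1)$ and not controlled by $\Delta^{n_0}_{i,\jj0}(s_2^-)+\|\varrho^{n_0}-\varrho\|$. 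This resynchronization jump occurs at rate $\lambda_{\mathrm b}\le\bar\lambda$, not at a small rate, so your displayed Gronwall inequality for fixed $\jj$ fails as written; moreover the bound for $\Delta^{n_0}_{i,\jj0}$ necessarily involves the parent quantity $\Delta^{n_0}_{i,\jj}$, and iterating down the tree introduces a multiplicative constant per generation that your appeal to ``independence of $\mathcal{N}_{1,\jj}$'' does not neutralize.

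This is exactly why the paper reverses the order: it first obtains $\varrho^{n_0}\to\varrho$ by compactness (no particle-level coupling needed), and then in the proof of Theorem~\ref{thm:conv} shows that, given $\|\varrho^{n_0}-\varrho\|_{C^1_b}<\delta$ on a large-probability event, one can choose $\delta$ so small that \emph{no} one-sided firing occurs in $\JJ_{\bar N}$ with high probability (Lemma~\ref{lm:delta}), whence $\mathbbm{\tilde x}^{n_0}_1=\mathbbm{x}^{n_0}_1$ and the genealogies coincide exactly. The paper never attempts to control $\Delta^{n_0}_{i,\jj}$ after a desynchronization; it shows desynchronization does not happen. Your strategy could perhaps be salvaged by tracking instead the probability of desynchronization per cell line and using Yule domination plus Cauchy--Schwarz to bound the post-break contribution, but that is a substantially different and more delicate argument than the Gronwall you wrote down.
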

Our next goal it to obtain pathwise convergence of $\mathbbm{x}^{n_0}_1$ to $\mathbbm{\bar x}$. 
We use the approach of \cite{BudhirajaFan2017}, but since both processes 
$\mathbbm{x}^{n_0}_1$ and $\mathbbm{\bar x}$ have discontinuous trajectories
the result is not as strong as there. 
The reason is, that jumps can occur not only in different moments of time, 
but also in different directions,
and after such a jump the processes are irrevocably not more close to each other.
Nevertheless, we have the following fact.
\begin{thm}
\label{thm:conv}
Let assumptions \ref{ass:WN}-\ref{ass:Fl} be satisfied. 
Then for each $T>0$
\begin{equation}
\sup_{t\in[0,T]}d_{\mathbb{X}}\left(
\mathbbm{x}^{n_0}_1(t),\mathbbm{\bar x}(t)
\right)
\end{equation}
converges to $0$ in probability.
\end{thm}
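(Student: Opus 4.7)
The plan is to exploit the fact that both $\mathbbm{x}^{n_0}_1$ and $\mathbbm{\bar x}$ are driven by exactly the same Brownian motions $(W_{1,\jj})_{\jj\in\JJ}$ and Poisson random measures $(\mathcal{N}_{1,\jj})_{\jj\in\JJ}$, so the whole discrepancy between the two labelled trees is forced by the difference between the chemoreactant fields $\varrho^{n_0}$ and $\varrho$. Accordingly, I would first reduce the statement to a quantitative control of $\eta_{n_0}:=\sup_{t\in[0,T]}\|\varrho^{n_0}_t-\varrho_t\|_{C^1_b}$, obtained from the mild formulations \eqref{eqn:rhon0trewritten}, \eqref{eqn:rhotrewritten} together with the smoothing of $\kappa\in C^2_b$ and of the semigroup $S_t$. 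This reduces the task to bounding $\sup_s\|\kappa*\xi^{n_0}_s-\kappa*\bar\mu_s\|_\infty$. Inserting the auxiliary empirical measure $\bar\xi^{n_0}_s$ built from $n_0$ independent copies of the mean-field tree, this quantity splits as $\kappa*(\xi^{n_0}_s-\bar\xi^{n_0}_s)+\kappa*(\bar\xi^{n_0}_s-\bar\mu_s)$, where the second term is controlled by a classical law of large numbers over i.i.d.\ copies, and the first by the pathwise tree-comparison below.

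Next I would run a generation-by-generation induction over $\jj\in\JJ$. Fix $\jj$ and assume both cells have been synchronized up to their common birth time. Because the SDEs \eqref{eqn:IBMXn} and \eqref{eqn:hybridX} for $\mathbf{X}^{n_0}_{1,\jj}$ and $\mathbf{\bar X}_\jj$ share the Brownian term, and the drift $\mathbf{b}$ is bounded and Lipschitz in its arguments by assumption \ref{ass:Fl}, Gronwall's lemma yields
\[
\sup_{t\le T}|\mathbf{X}^{n_0}_{1,\jj}(t)-\mathbf{\bar X}_\jj(t)|\le C_T\,\eta_{n_0}
\]
as long as no desynchronizing jump has occurred. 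Both death events are produced by the same Poisson process $\mathcal{N}_{1,\jj}$ firing below the time-dependent threshold $\lambda_{\text{b}}+\lambda_{\text{d}}$ evaluated along the two trajectories with $\varrho^{n_0}$ and $\varrho$, respectively; by Lipschitzness of these rates together with the Gronwall estimate just obtained, the Lebesgue measure of the symmetric difference of the two subgraphs in $[0,T]\times[0,\bar\lambda]$ is at most $C_T\,\eta_{n_0}$, so the probability that $\mathcal{N}_{1,\jj}$ charges this region is of the same order. On its complement the two cells die simultaneously, and their daughters are born at the same instant at the same (close) positions, allowing the induction to propagate to $\jj0$ and $\jj1$.

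The tree $\JJ$ is infinite, but for fixed $T$ the whole population is stochastically dominated by a Yule process of birth rate $\bar\lambda$, so for any $\varepsilon>0$ there exists $N=N(\varepsilon,T)$ with
\[
\P\bigl(\exists \jj\in\JJ,\ |\jj|>N,\ \mathbf{X}^{n_0}_{1,\jj}\text{ or }\mathbf{\bar X}_\jj\text{ alive at some }t\in[0,T]\bigr)<\varepsilon.
\]
Restricting attention to the finite set $\JJ_N=\{\jj:|\jj|\le N\}$, a union bound over the at most $2^{N+1}$ synchronization events in $\JJ_N$, combined with the Gronwall bound and the control of $\eta_{n_0}$ from the first step, closes the argument: on an event of probability at least $1-\varepsilon-C_{T,N}\,\eta_{n_0}$ one has $\sup_{t\le T}d_{\mathbb{X}}(\mathbbm{x}^{n_0}_1(t),\mathbbm{\bar x}(t))\le C_T\,\eta_{n_0}$, which, together with $\eta_{n_0}\to 0$ in probability, yields the claim by letting first $n_0\to\infty$ and then $\varepsilon\to 0$.

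The main obstacle is precisely the phenomenon flagged by the author: once a Poisson atom falls into the symmetric difference region, one cell has died or branched while its partner has not, and the metric at that index jumps to $1$ and never decreases, because $|\mathbf{x}-\phi|=1$. Consequently no mode of convergence stronger than convergence in probability is available, and the entire analysis must be carried out on a "good" event of high but not full probability. Technically, the first step and the induction are linked by a circular dependence, since $\eta_{n_0}$ controls the tree discrepancy which in turn feeds back into $\eta_{n_0}$ through $\xi^{n_0}_s-\bar\xi^{n_0}_s$; I would unwind this by introducing a stopping time $\theta_{n_0}$ marking the first desynchronizing jump inside $\JJ_N$ or the first time $\eta_{n_0}$ exceeds a prescribed threshold, running Gronwall on $[0,\theta_{n_0}\wedge T]$, and verifying $\P(\theta_{n_0}<T)\to 0$ by the same symmetric-difference and LLN estimates as above.
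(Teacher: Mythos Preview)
Your comparison mechanics---Gronwall on the drift difference, the Poisson symmetric-difference estimate for the jump times, and the Yule domination to truncate to a finite index set $\JJ_N$---match the paper's proof essentially line by line (the paper packages the same ingredients into Lemmas~\ref{lm:Omegaeta} and~\ref{lm:delta}, and uses an auxiliary process $\mathbbm{\tilde x}^{n_0}_1$ that has the birth/death times of $\mathbbm{\bar x}$ but the drift field $\varrho^{n_0}$, which is exactly your induction written globally).

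The substantive difference is in how you obtain the convergence of $\varrho^{n_0}$ to $\varrho$. The paper does \emph{not} extract $\eta_{n_0}\to 0$ from the pathwise tree comparison; it invokes Theorem~\ref{thm:convinD}(ii), which is proved independently by tightness of $\xi^{n_0}$ and $\varrho^{n_0}$ plus identification of the limit. This completely sidesteps the circularity you flag in your last paragraph: once $\varrho^{n_0}\to\varrho$ in $C_{C^1_b}[0,T]$ (locally uniformly) is available as a black box, the comparison is a one-way argument on the first tree only, with no feedback.

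Your alternative---closing $\eta_{n_0}$ through $\kappa*(\xi^{n_0}_s-\bar\xi^{n_0}_s)$ and the stopping time $\theta_{n_0}$---is in the spirit of Sznitman-type coupling, but as written it has a gap. The quantity $\xi^{n_0}_s-\bar\xi^{n_0}_s$ involves all $n_0$ trees, not just the first; on $[0,\theta_{n_0})$ each tree desynchronizes with probability $O(\eta_{n_0})$, so the \emph{expected fraction} of desynchronized trees is $O(\eta_{n_0})$, but each such tree contributes $O(1/n_0)$ times its (random, unbounded) particle count to the empirical measure. To make the bootstrap close you would need a concentration bound on the fraction of desynchronized trees and a uniform-in-$i$ control of the tree sizes, neither of which is automatic from the Yule bound in expectation. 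This can likely be made to work, but it is genuinely more delicate than the paper's route, and your sketch does not address it. Since Theorem~\ref{thm:convinD} is already available, the simplest fix is to cite it for $\eta_{n_0}\to 0$ and drop the circular first step entirely.
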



\section{Proofs}\label{sec:proofs}
\subsection{Wellposedness}\label{sec:proofs-wp}
In this section we prove firstly Theorem \ref{thm:well-p-IBM}, 
then Theorem \ref{thm:well-p-XM} and eventually Theorem \ref{thm:well-p-hybrid}, 
since its proof depends on fragments of two previous proofs.

We will repeatedly use the following fact
\begin{lemma}
\label{lm:Nt}
For any $T>0$ and all $n_0\in \mathbb{N}$
\[
\E\left[
\sup_{t\in[0,T]}\langle 1,\xi^{n_0}_t\rangle 
\right]=
\E\left[
\sup_{t\in[0,T]}\frac{1}{n_0}\langle 1,\mathbbm{x}^{n_0}(t)\rangle 
\right]
<e^{\bar\lambda t}
\]
\end{lemma}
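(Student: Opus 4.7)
The plan is to identify the quantity on both sides as (a multiple of) the total number of living cells and then to bound its supremum via a compensator estimate combined with a monotonicity trick. Write $N^{n_0}(t)=n_0\langle 1,\xi^{n_0}_t\rangle$; by the definition \eqref{eqn:xi_n}, this is exactly the number of indices $(i,\jj)$ with $t\in[\tau^{n_0}_{i,\jj},\sigma^{n_0}_{i,\jj})$, and the same count equals $\langle 1,\mathbbm{x}^{n_0}(t)\rangle$ read as the number of non-$\phi$ coordinates of the array. The asserted equality of the two expressions is therefore immediate from the definitions in Section~\ref{sec:IBM}.

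For the bound itself I would decompose $N^{n_0}(t)=n_0+B^{n_0}(t)-D^{n_0}(t)$, where $B^{n_0}$ and $D^{n_0}$ are the cumulative numbers of branching and death events up to time~$t$. Using the SDE representation from Section~\ref{sec:wellp} together with Assumption~\ref{ass:Fl}, each of these counting processes has a predictable intensity dominated pathwise by $\bar\lambda N^{n_0}(s)$. Localising along the stopping times $T_k=\inf\{t:N^{n_0}(t)\geq k\}$ and applying the compensator formula yields
\[
\E N^{n_0}(t\wedge T_k) \leq n_0 + \bar\lambda\int_0^t \E N^{n_0}(s\wedge T_k)\,ds,
\]
so Gronwall's inequality gives $\E N^{n_0}(t\wedge T_k)\leq n_0 e^{\bar\lambda t}$. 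Passing $k\to\infty$ via Fatou provides both $\E N^{n_0}(t)\leq n_0 e^{\bar\lambda t}$ and, as a byproduct, nonexplosion ($T_k\to\infty$ a.s.), which is actually what enables the compensator manipulations to be legitimate in the first place.

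The passage from the pointwise expectation to that of the supremum exploits the monotonicity of $B^{n_0}$: since $N^{n_0}(t)\leq n_0+B^{n_0}(t)$ pathwise and $B^{n_0}$ is non-decreasing,
\[
\sup_{t\in[0,T]} N^{n_0}(t) \leq n_0 + B^{n_0}(T).
\]
The intensity bound on $B^{n_0}$ together with the pointwise estimate on $\E N^{n_0}$ gives $\E B^{n_0}(T)\leq \bar\lambda\int_0^T n_0 e^{\bar\lambda s}\,ds = n_0(e^{\bar\lambda T}-1)$. Combining these two inequalities and dividing by $n_0$ yields the claimed bound.

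The only genuinely delicate point is justifying the compensator computations before nonexplosion is established; the stopping-time localisation handles this cleanly, and as an alternative one could couple $N^{n_0}$ pathwise with a Yule process of per-cell branching rate $\bar\lambda$ started from $n_0$, which is monotone and dominates $N^{n_0}$, so that $\sup_{t\in[0,T]} Z(t)=Z(T)$ and $\E Z(T)=n_0 e^{\bar\lambda T}$ conclude the proof without any localisation gymnastics.
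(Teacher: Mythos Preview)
Your argument is correct. The paper's own proof is essentially the one-line alternative you mention at the end: it invokes stochastic domination of the cell count by a pure-birth (Yule) process with per-particle rate $\bar\lambda$, uses that such a process is nondecreasing so that the supremum is attained at~$T$, and reads off $\E Z(T)=n_0 e^{\bar\lambda T}$. Your main route via the decomposition $N=n_0+B-D$, the compensator bound $\E B(T)\le\bar\lambda\int_0^T\E N(s)\,ds$, and Gronwall is more explicit and self-contained; it has the virtue of not relying on an external coupling construction and of yielding nonexplosion as a byproduct, at the cost of the localisation bookkeeping. The Yule comparison is shorter but presupposes that the reader accepts the pathwise domination without further justification. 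Either way the bound comes out the same, and your monotonicity trick $\sup_t N(t)\le n_0+B(T)$ is exactly what makes the supremum harmless---the paper leaves this step implicit in the monotonicity of the dominating Yule process.
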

\begin{proof}
Since the birth rate is $\lambda_{\text{b}}\le\bar\lambda $, 
we know that $\langle 1,\mathbbm{x}^{n_0}(t)$ (which is the number of cells) is for any $t$ less 
(in the sense of distribution)
than the number of particles in a simple birth only (Yule) process with branching rate $\bar\lambda$.
\end{proof}

\begin{proof}[Proof of Theorem \ref{thm:well-p-IBM}]
For notational convenience we assume $n_0=1$ and we omit indices $i$ and $n_0$,
besides $\varrho^{n_0}$ to avoid confusion with $\varrho$ from eqn. \eqref{eqn:hybrid}.
The prove for any other $n_0$ follows analogously with $\jj$ replaced by $(i,\jj)$.

Fix $T>0$. We will prove the existence and uniqueness in $[0,T]$.
Since we think about the solution to \eqref{eqn:IBM} pathwise,
the prove will be done for fixed $\omega$.
But firstly, we can neglect the set of probability 0.
By Lemma \ref{lm:Nt},
the number of cells at any finite $t$ is this number is finite  with probability one.
Therefore, if $\Omega_0$ is the set of those $\omega\in\Omega$,
that the microscopic model has infinite number of cells before time $T$,
then $\P(\Omega_0)=0$. Let $\Omega_1$ be the zero measure set for which any of 
the Poisson point processes $\mathcal{N}_{\jj}$ has infinite number of points in $[0,T]\times [0,\bar\lambda]$.

Let us now fix $\omega\in\Omega\setminus(\Omega_0\cup\Omega_1)$ and consider fixed trajectories of 
$\left(W_{\jj}\right)_{\jj\in\JJ}$ 
and $\left(\mathcal{N}_{\jj}\right)_{\jj\in\JJ}$ for this $\omega$.
For $\omega\not\in(\Omega_0\cup\Omega_1)$ there is a finite number of particles born before time $T$
and for any of those particles its Poisson point process has finite number of points in $[0,T]\times [0,\bar\lambda]$,
so there is a finite number, say $\bar n$, of points in all those Poisson  point processes.
Let us denote those times by $\left(t_k,z_k\right)_{k=1,\dots,\bar n}$ in the order of increasing times
and $t_0=0$. In between times $t_k$ the number of cells is constant, so
we solve recursively 
in the intervals  $[t_k,t_{k+1})$, $k=0,1,2,\dots$  a deterministic system
\begin{equation}
\label{eqn:IBMintau}
\begin{cases}
\mathbf{X}_{\jj}(t) = \mathbf{X}_{\jj}(t_k) + 
			\int_{t_k}^t \mathbf{b}(\mathbf{X}_{\jj}(s), \nabla\varrho^{n_0}(s,\mathbf{X}_{\jj}(s)))ds 
											 + \sigma (W_{\jj}(t)-W_{\jj}(t_k)),
\text{ for }\jj\in\JJ_{k},\\
\frac{\partial \varrho^{n_0}(t,\mathbf{x})}{\partial t} = 
D \Delta \varrho^{n_0}(t,\mathbf{x}) -r \varrho^{n_0}(t,\mathbf{x})  + \alpha[K * \xi_t](\mathbf{x}),
\end{cases}
\end{equation}
with 
$
\xi_t = \sum_{{\jj\in\JJ_k}}
	\delta_{X_{\jj}(t)}
$ 
where $\JJ_k$ is the set of indices of cells alive in interval $(t_k,t_{k+1})$,
i.e. $\JJ_0=\{\emptyset\}$ and 
\begin{align*}
\JJ_{k}=&\JJ_{k-1}\cup\{\jj i:\jj\in\JJ_{k-1},\,i=0,1,\,z_k<\lambda_{\text{b}}\bigl(X_{\jj}(t_k^-),\varrho^{n_0}(t_k^-,X_{\jj}(t_k^-))\bigr)\}
\\&\setminus \{\jj :\jj\in\JJ_{k-1},\,z_k<\lambda_{\text{b}}\bigl(X_{\jj}(t_k^-),\varrho^{n_0}(t_k^-,X_{\jj}(t_k^-))\bigr)
+\lambda_{\text{d}}\bigl(X_{\jj}(t_k),\varrho^{n_0}(t_k,X_{\jj}(t_k))\bigr)\}.
\end{align*}
The initial conditions at zero are
$\mathbf{X}_{\jj}(t_0)=\mathbf{X}^{1}_{1,\emptyset}(0)$ and
$\varrho^{n_0}(0,\cdot)=\varrho_0$,
and for $t_k>0$ recursively
$\varrho^{n_0}(t_k,\cdot)=\varrho^{n_0}(t_k^-,\cdot)$
and for $\jj\in\JJ_k$
\begin{equation*}
\mathbf{X}_{\jj}(t_k)=
\begin{cases}
\mathbf{X}_{\jj}(t_k^-), \text{ if }\jj\in\JJ_{k-1},
\\
\mathbf{X}_{\jj^\smalleftarrow}(t_k^-) \text{ otherwise. }
\end{cases}
\end{equation*}
The proof of existence and uniqueness of solutions to \eqref{eqn:IBMintau} on $(t_k,t_{k+1})$
is straightforward and goes e.g. by Piccard type argument. 
\end{proof}

The next proof uses classical methods from \cite{Sznitman1991} and is similar to the proof of 
Proposition 2.3. in \cite{BudhirajaFan2017}.
\begin{proof}[Proof of Theorem \ref{thm:well-p-XM}]
We prove the existence on the interval $[0,T]$. 
The scheme of the proof is the following:  given a fixed function $\varrho:[0,T]\to C^2(\mathbb{R}^d)$
we solve the SDE \eqref{eqn:XMrhoX}-\eqref{eqn:XMrhoM}.
Let $\tilde\varrho$ be a solution to \eqref{eqn:rhotrewritten} with $\mu$ given by \eqref{eqn:def-mu}.
Then we show that operator $P:\varrho\mapsto\tilde\varrho$ is a contraction, 
so there is a unique $\varrho$
satisfying \eqref{eqn:XMrho}, and therefore a unique $X$ and $M$.
We consider the operator $P$
on the space 
$E=\{\varrho\in C_{[0,T]}(C^2_b(\mathbb{R}^d)):
\mathop{\rm Lip}\varrho_t\wedge \mathop{\rm Lip}\nabla\varrho_t\le L \text{ for }t\in[0,T]\}$
where $L$ is the maximum of $L_{\kappa}$ and the Lipschitz coefficient of $\varrho_0$,
with the Bielecki norm 
$\vertiii{\varrho}_\gamma = \max_{t\in [0,T]}e^{-\gamma t}\left(\|\varrho_t\|_\infty+\|\nabla\varrho_t\|_\infty\right)$.
Note that, thanks to properties of heat kernel, $\varrho_t$ given by \eqref{eqn:rhotrewritten}
and $\nabla\varphi _t$
are Lipschitz with coefficient $L$, so $P(E)\subset E$.

To prove that $P$ is contractive, take $\varrho^{(1)},\varrho^{(2)}\in C_{[0,T]}\left(C^2(\mathbb{R}^d)\right)$
and let $(X^{(i)},M^{(i)})$ be the pathwise unique solutions to
\begin{equation}
\label{eqn:XiMi}
\begin{cases}
\mathbf{X}^{(i)}(t) =  \mathbf{X}(0) + \int_0^t \mathbf{b}(\mathbf{X}^{(i)}(s),\nabla\varrho^{(i)}(s,\mathbf{X}^{(i)}(s)))ds + \sigma \,W(t),\\
M^{(i)}(t) = M(0) + \int_0^t \lambda \bigl(\mathbf{X}^{(i)}(s),\varrho^{(i)}(s,\mathbf{X}^{(i)}(s))\bigr)M^{(i)}(s)\,ds,
\end{cases}
\end{equation}
for $i=1,2$. Note that $M^{(i)}(t)\le M(0)e^{\bar\lambda t}=e^{\bar\lambda t}$.
Thus we have
\begin{align*}
|M^{(1)}(t)-M^{(2)}(t)|
\le&
\int_0^t
\Bigl[\bar\lambda|M^{(1)}(s)-M^{(2)}(s)|
+e^{\bar\lambda t}L_\lambda (1+L)\|\mathbf{X}^{(1)}(s)-\mathbf{X}^{(2)}(s)\|
\\&+e^{\bar\lambda t}L_\lambda \|\varrho_s^{(1)}-\varrho_s^{(2)}\|_\infty
\Bigr]\,ds,
\\
\|\mathbf{X}^{(1)}(t)-\mathbf{X}^{(2)}(t)\|\le&
\int_0^t
\Bigl[e^{\bar\lambda}L_{\mathbf{b}} (1+L)\|\mathbf{X}^{(1)}(s)-\mathbf{X}^{(2)}(s)\|
+L_{\mathbf{b}} \|\nabla\varrho_s^{(1)}-\nabla\varrho_s^{(2)}\|_\infty
\Bigr]\,ds.
\end{align*}
So by Gronwall's Lemma we have
\begin{multline}
|M^{(1)}(t)-M^{(2)}(t)|
+
\|\mathbf{X}^{(1)}(t)-\mathbf{X}^{(2)}(t)\|\le\\
c_1\int_0^t\left(\|\varrho_s^{(1)}-\varrho_s^{(2)}\|_\infty
+\|\nabla\varrho_s^{(1)}-\nabla\varrho_s^{(2)}\|_\infty\right)ds.
\end{multline}
where $c_1$ (and similarly $c_2$ to $c_4$ below) is a constant
depend only on the bounds and Lipschitz coefficients of 
the functions $K$, $\nabla K$, $\Delta K$, $\lambda$ and $\mathbf{b}$ and time $T$.
Moreover 
\[
\bigl|
[
 \kappa(\mathbf{y}-X^{(1)}(t))M^{(1)}(t)
- \kappa(\mathbf{y}-X^{(2)}(t))M^{(2)}(t)
]\bigr|
\le
c_2
(|M^{(1)}(t)-M^{(2)}(t)|
+
\|\mathbf{X}^{(1)}(t)-\mathbf{X}^{(2)}(t)\|).
\]
If $\mu^{(i)}$ is given by \eqref{eqn:def-mu} for $(\mathbf{X}^{(i)},M^{(i)})$ then
\[
\kappa*\mu_t^{(i)}(\mathbf{y})=
\E\left[
	\kappa(\mathbf{y}-\mathbf{X}^{(i)}(t))M^{(i)}(t)
\right].
\]
Therefore, if 
$
\tilde\varrho^{(i)}_t(\mathbf{x}) 
	= S_t \varrho_0(\mathbf{x}) 
	+ \alpha\int_0^t S_{t-s}[K * \mu^{(i)}_s](\mathbf{x})\,ds
$
then
\begin{multline*}
|\tilde\varrho^{(1)}_t(\mathbf{x}) 
-\tilde\varrho^{(2)}_t(\mathbf{x}) |
=
	\left|\alpha\int_0^t\int_{\mathbb{R}^d} p(t-s,\mathbf{x},\mathbf{y})
	[K * \mu^{(1)}_s-K * \mu^{(2)}_s](\mathbf{y})\,d\mathbf{y}\,ds\right|
\\\le
c_3	\int_0^t \int_0^r \left(\|\varrho_s^{(1)}-\varrho_s^{(2)}\|_\infty
+\|\nabla\varrho_s^{(1)}-\nabla\varrho_s^{(2)}\|_\infty\right)\,ds\,dr 
\end{multline*}
and likewise
\begin{multline*}
|\nabla\tilde\varrho^{(1)}_t(\mathbf{x}) 
-\nabla\tilde\varrho^{(2)}_t(\mathbf{x}) |
\le
c_4	\int_0^t \int_0^r \left(\|\varrho_s^{(1)}-\varrho_s^{(2)}\|_\infty
+\|\nabla\varrho_s^{(1)}-\nabla\varrho_s^{(2)}\|_\infty\right)\,ds\,dr .
\end{multline*}
Therefore,
\begin{multline*}
\vertiii{\tilde\varrho^{(1)}-\tilde\varrho^{(2)}}
\le
(c_3+c_4) \int_0^t \int_0^r e^{-\gamma (t - s)} \vertiii{\tilde\varrho^{(1)}-\tilde\varrho^{(2)}} ds dr 
= \\(c_3+c_4) \frac{e^{-\gamma t} (-\gamma t + e^{\gamma t} - 1)}{\gamma^2}
\vertiii{\tilde\varrho^{(1)}_s-\tilde\varrho^{(2)}_s},
\end{multline*}
so, for sufficiently large $\gamma$,  $P$ is contractive in $\vertiii{\cdot}_\gamma$.	
\end{proof}

Now we are ready to prove the well-posedness of the mean-field model.
\begin{proof}[Proof of Theorem \ref{thm:well-p-hybrid}]
The scheme will be the following. For any function $\varrho\in C_{C^1_b(\mathbb{R}^d)}[0,T]$
we notice the existence and uniqueness of branching diffusion process given by \eqref{eqn:hybridX+bd}
(a proof can be done as in the proof of Theorem \ref{thm:well-p-IBM}). Then we show that $\bar\mu$ given by \eqref{eqn:barmu} is equal to $\mu$  
given by \eqref{eqn:def-mu} for $(\mathbf{X},M)$ obtained as solution to \eqref{eqn:XiMi} with the same 
$\varrho$. That means that, by Theorem \ref{thm:well-p-XM}, there exists a unique $\varrho$ such that  
\eqref{eqn:hybrid} is satisfied.

To this aim, fix $\varrho:[0,T]\to C^2_b(\mathbb{R}^d)$ continuous in time and 
let process $\bar{\mathbbm{x}}(t)={\bigl(\mathbf{\bar X}_{\jj}(t)\bigr)_{\jj\in\JJ}}$, $t\in[0,T]$ 
be the  solution to \eqref{eqn:hybridX+bd} with this fixed $\varrho$.
For $\varphi:\mathbb{R}^d\cup\{\phi\}\to\mathbb{R}$ such that $\varphi (\phi)=0$ 
and $\varphi|_{\mathbb{R}^d}\in C^2_b(\mathbb{R}^d)$
 denote 
\[
\langle \varphi ,\mathbbm{\bar x}(t)\rangle =
\sum_{\jj\in\JJ}\varphi (\mathbf{\bar X}_\jj(t)).
\]
Note that this sum is finite.
By It\^o's Lemma we have
\begin{align*}
\langle \varphi ,\mathbbm{\bar x}(t)\rangle = &
\langle \varphi ,\mathbbm{x}(0)\rangle
+
\int_0^t
\left\langle 
	\mathbf{b}(\cdot, \nabla\varrho(t,\cdot))
		\nabla\varphi (\cdot)+\frac{\sigma ^2}{2}\Delta\varphi (\cdot),
	\mathbbm{\bar x}(s)
\right\rangle ds
+
\sigma \sum_{\jj\in\JJ} \int_0^t
 \nabla\varphi (\mathbf{\bar X}_\jj(s)) dW_{1,\jj}(s)
\\&
-\sum_{\jj\in\JJ} \int_0^t
\varphi(X_\jj(s^-))\II_{[0,\lambda_\text{b}(X_\jj(s^-),\rho(s^-,\mathbf{\bar X}_\jj(s^-)))+
								\lambda_\text{d}(X_\jj(s^-),\rho(s^-,\mathbf{\bar X}_\jj(s^-)))]}(z) \mathcal{N}_{1,\jj}(ds,dz) 
\\&+
\sum_{\jj\in\JJ} \int_0^t
2\varphi(\mathbf{\bar X}_\jj(s^-))\II_{[0,\lambda_\text{b}(X_\jj(s^-),\rho(s^-,\mathbf{\bar X}_\jj(s^-)))]}(z) \mathcal{N}_{1,\jj}(ds,dz) 
\end{align*}
and thus
\begin{equation*}
\E \langle \varphi ,\mathbbm{x}(t)\rangle = 
\E\langle \varphi ,\mathbbm{x}(0)\rangle
+
\E\int_0^t
\left\langle 
	B_{\varrho,s}\varphi ,\mathbbm{x}(s)
\right\rangle 
+
\left\langle 
	\lambda(\cdot ,\varrho(s,\cdot )) \varphi(\cdot )  ,\mathbbm{x}(s)
\right\rangle ds,
\end{equation*}
where
$B_{\varrho,s}\varphi(\mathbf{x}) =	\frac{\sigma ^2}{2}\Delta\varphi (\mathbf{x})
+\mathbf{b}(\mathbf{x}, \nabla\varrho(s,\mathbf{x}))
\nabla\varphi (\mathbf{x})$ 
for $\mathbf{x}\in\mathbb{R}^d$
and $B_{\varrho,s}\varphi(\phi)=0$,
and
$\lambda=\lambda_\text{b}-\lambda_\text{d}$.

Let 
$
\bar\xi_t = \sum_{{\jj\in\JJ}}
	\II_{\mathbb{R}^d}(\mathbb{\bar X}_{\jj}(t))\delta_{\mathbb{\bar X}_{\jj}(t)}
$ 
and let $\langle \varphi,\bar\xi_t  \rangle =\int_{\mathbb{R}^d} \varphi (\mathbf{x}) \bar\xi_t(d \mathbf{x})$.
Then $\langle \varphi,\bar\xi_t  \rangle =\langle \varphi, \mathbbm{\bar x}(t) \rangle$ and
\begin{align*}
 \langle \varphi ,\E\bar\xi_t\rangle = &
\langle \varphi ,\E\bar\xi_0\rangle
+
\int_0^t
\left\langle 
B_{\varrho,s}\varphi ,\E\bar\xi_s
\right\rangle 
+
\left\langle 
	\lambda(\cdot ,\varrho(s,\cdot )) \varphi(\cdot ) ,\E\bar\xi_s
\right\rangle ds,
\end{align*}
which means that
\begin{equation}
\label{eqn:weekfor<barmu>}
\langle \varphi ,\bar\mu_t\rangle = 
\langle \varphi ,\bar\mu_0\rangle
+
\int_0^t
\left\langle 
	B_{\varrho,s}\varphi ,\bar\mu_s
\right\rangle 
+
\left\langle 
	\lambda(\cdot ,\varrho(s,\cdot )) \varphi(\cdot ) ,\bar\mu_s
\right\rangle ds.
\end{equation}
This is the week version of Equation \eqref{eqn:macrop} and it is well known that it admits a unique solution
which is absolutely continuous with respect to Lebesgue measure for $t>0$ 
even if $\bar\mu_0$ is not.

Let now  $(\mathbf{X},M)$ be a process obtained as a solution to \eqref{eqn:XiMi} with given $\varrho$
and $\varphi\in C^2_b(\mathbb{R}^d)$.
Then, by It\^o formula we have
\begin{align*}
 \varphi(\mathbf{X}(t))M(t)
=&
\varphi (\mathbf{X}(0))M(0)
+\int_0^t
\biggl[
	M(s)
	\nabla \varphi(\mathbf{X}(s))\cdot \mathbf{b}(\mathbf{X}(s),\nabla\varrho(s,\mathbf{X}(s)))
\\&
	+\varphi(\mathbf{X}(s))\lambda \bigl(\mathbf{X}(s),\varrho(s,\mathbf{X}(s))\bigr)M(s)
	+\frac{\sigma^2}{2}\Delta \varphi(\mathbf{X}(s))
\biggl]\,ds\\&
- \int_0^t 
\nabla \varphi(\mathbf{X}(s))\cdot \mathbf{b}(\mathbf{X}(s), \nabla\varrho(s,\mathbf{X}(s)))
dW(s)
\end{align*}
Note that for $\mu_t$ given by \eqref{eqn:def-mu} we have 
$\langle \varphi ,\mu_t\rangle =\int_{\mathbb{R}^d}\varphi (\mathbf{x})\mu_t(d\mathbf{x})=\E[\varphi (\mathbf{X}(t))M(t)]$.
Taking expectation on both sides of the equation above we get
\begin{equation}
\langle \varphi ,\mu_t\rangle =
\int_0^t
\left\langle 
B_{\varrho,s}\varphi(\cdot )+
	\lambda(\cdot ,\varrho(s,\cdot )) \varphi(\cdot ) ,\mu_s
\right\rangle ds.
\end{equation}
which is exactly the same as \eqref{eqn:weekfor<barmu>}.
\end{proof}

\subsection{Proof of convergence}


\begin{proof}[Proof of Theorem \ref{thm:convinD}]
In order to prove the point (i) we check that the sequence $\xi^{n_0}_t$ is tight on $D_{\mathcal{M}[0,t]}$
and then we check that the limit of any subsequence has to coincide with $\bar\mu_t$.
Similarly, we prove tightness of $\varrho^{n_0}_t$ in $C_{C^1_b(\mathbb{R}^d)}[0,T]$ 
and check that the limit has to satisfy \eqref{eqn:rhotrewritten}.

\textbf{Tightness of $\{\xi^{n_0}\}_{n_0\in\mathbb{N}}$}. 
The process $\xi^{n_0}$ has values in the space $\mathcal{M}$ of finite positive Radon measures on $\mathbb{R}^d$. 
Note that the $\mathcal{M}$ with the vague convergence topology can be metrizable, eg. with metric
\[
d_{\mathcal{M}}(\mu,\nu)
=
\sum_{k=1}^\infty \frac{1}{2^k}\min\{1, \langle \varphi_k,\mu-\nu\rangle \}
\]
with some sequence  $\varphi_k\in C_c(\mathbb{R}^d)$ (cf. \cite[Section §31]{Bauer2001})
in such a way that $\mathcal{M}$ is complete. Moreover,
set $H\subset\mathcal{M}$ is  vaguely relatively compact if and only if
\[
\sup_{\mu\in H}|\langle f,\mu\rangle| \quad \text{ for all }f\in C_c(\mathbb{R}^d),
\]
where one can take $(\varphi_k)_{k\in\mathbb{N}}$ instead of all $f\in C_c(\mathbb{R}^d)$.
Therefore, Proposition 1.7 from \cite[Chapter 4]{KipnisLandim1999}
holds for processes with values in $(\mathcal{M},d_{\mathcal{M}})$.
Now, thanks to Aldous criterion (see, eg. \cite[Chapter VI, Theorem 4.5]{JacodShiryaev1987})
for the relative compactness of $\{\xi^{n_0}_t\}$
it suffices to check for all $\varphi_k$
that for any $\varepsilon >0$ there exists $M>0$ such that
\begin{equation}
\label{eqn:aldous1}
\P\left(
	\langle \varphi_k,\xi^{n_0}_t\rangle >M
\right)<\varepsilon ,
\text{ for all }t\in[0,T]
\text{ and }n_0\in\mathbb{N}
\end{equation}
and
\begin{equation}
\label{eqn:aldous2}
\lim_{\gamma \to 0} \limsup_{n_0\to \infty} 
\sup_{\tau\in\mathcal{T}_T,\theta<\gamma}
\P\left(
|\langle \varphi_k , \xi^{n_0}_{\tau+\theta}\rangle 
-\langle \varphi_k , \xi^{n_0}_\tau\rangle |>\varepsilon 
\right)=0,
\end{equation}
where $\mathcal{T}_T$ is the set
of all stopping times bounded by $T$.
Note that \eqref{eqn:aldous1} follows by Markov's inequality from Lemma \ref{lm:Nt}.
To prove \eqref{eqn:aldous2}, using It\^o's Lemma we calculate
\begin{align*}
\langle \varphi_k ,\xi^{n_0}_t\rangle =&
\langle \varphi_k ,\tfrac{1}{n_0}\mathbbm{x}^{n_0}(t)\rangle = 
\langle \varphi_k ,\tfrac{1}{n_0}\mathbbm{x}^{n_0}(0)\rangle
\\&+
\frac{1}{n_0}\int_0^t
\left(
	\sum_{i=1}^{n_0}\sum_{\jj\in\JJ} 
		\mathbf{b}(\mathbf{X}^{n_0}_{i,\jj}(s), \nabla\varrho^{n_0}(t,\mathbf{X}^{n_0}_{i,\jj}(s)))
		\nabla\varphi_k (\mathbf{X}^{n_0}_{i,\jj}(s))
	+
	\frac{\sigma^2}{2}\Delta \varphi_k (\mathbf{X}^{n_0}_{i,\jj}(s))
\right)
ds
\\&+
\frac{1}{n_0}\int_0^t
\sigma \sum_{i=1}^{n_0}\sum_{\jj\in\JJ} \nabla\varphi_k (\mathbf{X}^{n_0}_{i,\jj}(s)) dW_{i,\jj}(s)
+\frac{1}{n_0}\sum_{i=1}^{n_0}\sum_{s\le t}\left(\langle \varphi_k (\mathbf{X}^{n_0}_{i,\jj}(s)) -\varphi_k (\mathbf{X}^{n_0}_{i,\jj}(s^-))\rangle \right)
\\=&
\langle \varphi_k ,\xi^{n_0}_0\rangle
+
\int_0^t
\left\langle 
	\mathbf{b}(\cdot, \nabla\varrho^{n_0}(s,\cdot))
		\nabla\varphi_k (\cdot)+\frac{\sigma ^2}{2}\Delta\varphi_k (\cdot)
		+\lambda(\cdot ,\varrho^{n_0}(s,\cdot )) \varphi_k(\cdot ),
	\xi^{n_0}_s
\right\rangle ds
\\
M^{n_0}_{1,k}(t)\begin{cases}
\\[2ex] 
\end{cases}
&
+
\frac{\sigma}{n_0} \sum_{i=1}^{n_0}\sum_{\jj\in\JJ} \int_0^t
 \nabla\varphi_k (\mathbf{X}^{n_0}_{i,\jj}(s)) dW_{i,\jj}(s)
\\
M^{n_0}_{2,k}(t)\begin{cases}
\\ \\ \\ \\ \\ \\
\end{cases}
&
\begin{aligned}
&
-\frac{1}{n_0}\sum_{i=1}^{n_0}\sum_{\jj\in\JJ} \int_0^t 
\varphi_k(\mathbf{X}^{n_0}_{i,\jj})\II_{[0,\lambda_\text{b}(X_\jj(s^-),\rho^{n_0}(s^-,\mathbf{X}^{n_0}_{i,\jj}(s^-)))+
								\lambda_\text{d}(X_\jj(s^-),\rho^{n_0}(s^-,\mathbf{X}^{n_0}_{i,\jj}(s^-)))]}(z) \mathcal{N}_{i,\jj}(ds,dz) 
\\&+
\frac{1}{n_0}\sum_{i=1}^{n_0}\sum_{\jj\in\JJ} \int_0^t
2\varphi_k(\mathbf{X}^{n_0}_{i,\jj}(s^-))\II_{[0,\lambda_\text{b}(X_\jj(s^-),\rho^{n_0}(s^-,\mathbf{X}^{n_0}_{i,\jj}(s^-)))]}(z) \mathcal{N}_{i,\jj}(ds,dz) 
\\&
-
\int_0^t
\left\langle 
	\lambda(\cdot ,\varrho^{n_0}(s,\cdot )) \varphi_k(\cdot ),
	\xi^{n_0}_s
\right\rangle ds,
\end{aligned}
\end{align*}
where $M^{n_0}_{1,k}(t)$ and $M^{n_0}_{1,k}(t)$ are martingales.
Therefore,
\begin{align*}
\langle \varphi_k ,\xi^{n_0}_{\tau+\theta}-\xi^{n_0}_\tau\rangle =&
\int_\tau^{\tau+\theta}
\left\langle 
	\mathbf{b}(\cdot, \nabla\varrho^{n_0}(s,\cdot))
		\nabla\varphi_k (\cdot)+\frac{\sigma ^2}{2}\Delta\varphi_k (\cdot)
		+\lambda(\cdot ,\varrho^{n_0}(s,\cdot )) \varphi_k(\cdot ),
	\xi^{n_0}_s
\right\rangle ds
\\&+
M^{n_0}_{1,k}(\tau+\theta)-M^{n_0}_{1,k}(\tau)+M^{n_0}_{2,k}(\tau+\theta)-M^{n_0}_{2,k}(\tau)
\end{align*}
The integral over $ds$ can be estimated by 
a constant times $\theta\sup_{s\in[0,T]}\langle 1,\xi^{n_0}_s\rangle $,
so, since $\theta\le\gamma$,  by Lemma \ref{lm:Nt} and Markov inequality, 
probability that it is greater then $\varepsilon $
goes to zero as $\gamma\to0$.
By It\^o's Lemma,
\[
(M^{n_0}_{1,k}(t))^2 = 
\frac{\sigma^2}{n_0^2} \sum_{i=1}^{n_0}\sum_{\jj\in\JJ} \int_0^t
 \left(\nabla\varphi_k (\mathbf{X}^{n_0}_{i,\jj}(s))\right)^2 ds + \tilde M^{n_0}_{1,k}(t),
\]
where $\tilde M^{n_0}_{1,k}(t)$ is a martingale, so
\begin{multline}
\label{eqn:EM1}
\E\left[\left(M^{n_0}_{1,k}(\tau+\theta)-M^{n_0}_{1,k}(\tau)\right)^2\right]=
\E\left[\left(M^{n_0}_{1,k}(\tau+\theta)\right)^2\right]-\E\left[\left(M^{n_0}_{1,k}(\tau)\right)^2\right]=\\
\E\left[\frac{\sigma^2}{n_0^2} \sum_{i=1}^{n_0}\sum_{\jj\in\JJ} \int_\tau^{\tau+\theta}
 \left(\nabla\varphi_k (\mathbf{X}^{n_0}_{i,\jj}(s))\right)^2 ds \right]
\le \theta \frac{\sigma^2}{n_0} \|\nabla\varphi_k \|_\infty 
\E\left[\sup_{s\in[0,T]}\frac{\langle 1,\xi^{n_0}\rangle }{n_0}\right].
\end{multline}
Similarly,
using  It\^o's formula again we get
\begin{align*}
(M^{n_0}_{2,k}(t))^2 = &
-\int_0^t
2M^{n_0}_{2,k}(s)
\left\langle 
		\lambda(\cdot ,\varrho^{n_0}(s,\cdot )) \varphi_k(\cdot )  ,\xi^{n_0}_s
\right\rangle
ds
\\&-
\sum_{i=1}^{n_0}\sum_{\jj\in\JJ} \int_0^t
 \left(\frac{1}{n_0^2}\varphi_k(\mathbf{X}^{n_0}_{i,\jj}(s^-))^2
 -\frac{2}{n_0}M^{n_0}_{2,k}(s)
 \varphi_k(\mathbf{X}^{n_0}_{i,\jj}(s^-))
 \right)
\\&\qquad \qquad \qquad  \times 
 \II_{[0,\lambda_\text{b}(\mathbf{X}^{n_0}_{i,\jj}(s^-),\varrho^{n_0}(s^-,\mathbf{X}^{n_0}_{i,\jj}(s^-)))+
								\lambda_\text{d}(\mathbf{X}^{n_0}_{i,\jj}(s^-),\varrho^{n_0}(s^-,\mathbf{X}^{n_0}_{i,\jj}(s^-)))]}(z) \mathcal{N}_{i,\jj}(ds,dz) 
\\&+
\sum_{i=1}^{n_0}\sum_{\jj\in\JJ} \int_0^t
 \left(\frac{4}{n_0^2}\varphi_k\left(\mathbf{X}^{n_0}_{i,\jj}(s^-)\right)^2
 +\frac{4}{n_0}M^{n_0}_{2,k}(s)
\varphi_k\left(\mathbf{X}^{n_0}_{i,\jj}(s^-)\right)
 \right)
\\&\qquad \qquad \qquad  \times 
 \II_{[0,\lambda_\text{b}(\mathbf{X}^{n_0}_{i,\jj}(s^-),\varrho^{n_0}(s^-,\mathbf{X}^{n_0}_{i,\jj}(s^-)))]}(z) \mathcal{N}_{i,\jj}(ds,dz) 
\\=&
\frac{1}{n_0}\int_0^t
\left\langle [3\lambda_\text{b}(\cdot ,\varrho^{n_0}(s^-,\cdot ))-
								\lambda_\text{d}(\cdot ,\varrho^{n_0}(s^-,\cdot ))]\varphi_k(\cdot )^2,
		\xi^{n_0}_s\right\rangle 
ds
+\tilde M^{n_0}_{2,k}(t).
\end{align*}
where $\tilde M^{n_0}_{2,k}(t)$ is a martingale, thus again
\begin{multline}
\label{eqn:EM2}
\E\left[\left(M^{n_0}_{2,k}(\tau+\theta)-M^{n_0}_{2,k}(\tau)\right)^2\right]
\le \theta \frac{3\bar\lambda}{n_0} \|\varphi_k^2 \|_\infty 
\E\left[\sup_{s\in[0,T]}\frac{\langle 1,\xi^{n_0}\rangle }{n_0}\right],
\end{multline}
which completes the proof of \eqref{eqn:aldous2}.


\textbf{Tightness of $\{\varrho^{n_0}\}_{n_0\in\mathbb{N}}$.}
Although it would be sufficient to use the topology of locally uniform convergence, 
we need a stronger convergence in the next  proof, so
let us consider $C^1_b(\mathbb{R}^d)$ with the topology 
of locally  uniform convergence of function and its derivative,
that is we us a norm 
\begin{equation}
\|f\|_{C^2_b}=
\sum_{R=1}^\infty \frac{1}{2^R}\left(\sup_{x\in B(0,R)}|f(x)|+\sup_{x\in B(0,R)}\|\nabla f(x)\|\right).
\end{equation}
Note that a set
$
K_M = \{f\in C^1_b(\mathbb{R}^d):
	 \|f\|_\infty\le M,\|\nabla f\|_\infty\le M, \|\text{Hess} f\|_\infty\le M\},$
where $\|\text{Hess} f\|_\infty = \sup_{\mathbf{x}\in\mathbb{R}^d}\max_{1\le i,j \le d}|\partial_i \partial _j f(\mathbf{x})|$,
is relatively compact in this norm.
Using the version of Ascoli Theorem (see eg. \cite[Theorem 47.1]{Munkres2000})
we know that a family $\mathcal{K}_M\subset \mathcal{C}_T=C_{C^1_b(\mathbb{R}^d)}[0,T]$ 
 of functions $g$,
which are equicontinuous in $t$ and such that $\{g(t):g\in\mathcal{K}_M\}\subset K_M$  for each $t\in[0,T]$,
is relatively compact.
Now, in order to prove tightness of $\varrho^{n_0}_t$, we need to check that 
for any $\varepsilon >0$ there exists $M>0$ such that
$\P(\varrho^{n_0}\in\mathcal{K}_M)>1-\varepsilon $.
To that end, recall that $\varrho^{n_0}_t=S_t\rho_0+\alpha\int_0^t S_{t-s}\kappa*\xi^{n_0}_s(x) ds$
where the first summand is continuous in $\mathcal{C}_T$ and the latter is Lipschitz with probability $1-\varepsilon $,
because
\[
\left|\alpha\int_t^{t+\theta}
 S_{t+\theta-s}[\kappa * \xi^{n_0}_s](\mathbf{x})\,ds\right|
\le \theta \alpha \|\kappa\|_\infty \sup_{s\in[0,T]}\langle 1,\xi^{n_0}_s\rangle
\]
and
\[
\left|\alpha\nabla\int_t^{t+\theta}
 S_{t+\theta-s}[\kappa * \xi^{n_0}_s](\mathbf{x})\,ds\right|
\le \theta \alpha \|\nabla\kappa\|_\infty \sup_{s\in[0,T]}\langle 1,\xi^{n_0}_s\rangle.
\]
Moreover, we have
\[
|\varrho^{n_0}_t(x)|\le
|S_t\rho_0(x)|+\sup_{s\in[0,T]}\|\kappa*\xi^{n_0}_s\|_\infty
\le
\|\rho_0\|_\infty+t\|\kappa\|_\infty \sup_{s\in[0,T]}\langle 1,\xi^{n_0}_s\rangle,
\]
similarly
\begin{equation}
\label{eqn:rhon0Lip}
|\nabla \varrho^{n_0}_t(x)|
\le
\|\nabla\rho_0\|_\infty+t\,\|\nabla\kappa\|_\infty \sup_{s\in[0,T]}\langle 1,\xi^{n_0}_s\rangle,
\end{equation}
and further
\begin{equation}
\label{eqn:Hessrhon}
|\partial _i\partial _j \varrho^{n_0}_t(x)|
\le
\|\partial _i\partial _j\rho_0\|_\infty+t\,\|\partial _i\partial _j\kappa\|_\infty \sup_{s\in[0,T]}\langle 1,\xi^{n_0}_s\rangle .
\end{equation}
These estimates with Lemma \ref{lm:Nt} and Markov's inequality complete the proof of tightness.

\textbf{Identification of the limit.} By the similar estimates as in \eqref{eqn:EM1} and  \eqref{eqn:EM2}
we get that the limit has to satisfy \eqref{eqn:weekfor<barmu>} for any $\varphi \in C^2_c(\mathbb{R}^d)$ 
and \eqref{eqn:rhotrewritten} which admit a unique solution.
\end{proof}

\subsection{Proof of Theorem \ref{thm:conv}}
Fix $T>0$ and $\varepsilon >0$.
We use here the coupling of ${\mathbbm{x}}^{n_0}_1(t)$ and $\bar{\mathbbm{x}}(t)$
obtained by using the same processes $W_{1,\jj}$ and $\mathcal{N}_{1,\jj}$,
and the fact from Theorem \ref{thm:convinD} that  $\varrho^{n_0}$ converges in probability to $\varrho $.

Let $\bar\sigma_{\jj}$ and $\bar\tau_{\jj}$ denote times of birth and death, respectively,
of the $\jj$-th particle of $\bar{\mathbbm{x}}(t)$
and let us construct such a process  $\mathbbm{\tilde x}^{n_0}_1$
that its $\jj$-th particle lives
from $\bar\sigma_{\jj}$ to $\bar\tau_{\jj}$
and moves during this time according to the equation
\begin{equation*}
\mathbf{\tilde X}^{n_0}_{1,\jj}(t) = \mathbf{\tilde X}^{n_0}_{1,\jj}\left(\bar\sigma_{\jj}\right)
	+ \int_{\bar\sigma_{\jj}}^t \mathbf{b}(\mathbf{\tilde X}^{n_0}_{1,\jj}(s), 
		\nabla\varrho^{n_0}(s,\mathbf{\tilde X}^{n_0}_{i,\jj}(s)))\,ds 
	+ \sigma \left(W_{1,\jj}(t)-W_{1,\jj}(\bar\sigma_{\jj})\right),
\end{equation*}
for $t\in \left[\bar\sigma_{\jj},\bar\tau_{\jj}\right)$.
It means that the particles of $\mathbbm{\tilde x}^{n_0}_1$ die and are born in the same times as particles of $\mathbbm{\bar x}$
but their dynamics is the same as the dynamics of $\mathbbm{x}^{n_0}_1$.
The idea is to prove, that for $n_0$ large enough with high probability  
$\mathbbm{\tilde x}^{n_0}_1$ is close to $\mathbbm{\bar x}$
 and equal to $\mathbbm{x}^{n_0}_1$.
\begin{lemma}
\label{lm:Omegaeta}
Fix $T>0$. For any $\eta >0$ we can find a set $\Omega_\eta $
such that $\P(\Omega_\eta )>1-\eta $
and 
\begin{enumerate}
\renewcommand{\labelenumi}{\textbf{\theenumi}}
\renewcommand{\theenumi}{(\roman{enumi})}
\item
there exists $\bar N>0$  such that $\sup_{t\in[0,T]}\langle 1,\bar{\mathbbm{x}}(t)\rangle \le \bar N$
i.e. there is at most $\bar N$ particles of $\bar{\mathbbm{x}}(t)$ alive to time $T$
\item
there exists  $R>0$ such that all particles of $\bar{\mathbbm{x}}(t)$ 
live in the ball of radius $R$, i.e.
\[\sup_{t\in [0,T]} \max_{\jj\in\JJ}\|\mathbf{\bar X}_{\jj}\|\le R\]
(there is at most $\bar N$ particles in the maximum).
\end{enumerate}
for $\omega \in \Omega_\eta $.
\end{lemma}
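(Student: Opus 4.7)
The plan is to build $\Omega_\eta$ as the intersection of four good events: one giving (i) directly, one bounding the \emph{total} number of cells that ever appear on $[0,T]$, one controlling the driving Brownian motions, and one controlling the initial position, so that (ii) reduces to a deterministic estimate.

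For (i) I apply the same Yule-process domination argument that underlies Lemma \ref{lm:Nt}: the total birth rate in \eqref{eqn:hybridX+bd} is bounded by $\bar\lambda$ per alive cell regardless of which $\varrho$ appears, so $\sup_{t\le T}\langle 1,\bar{\mathbbm{x}}(t)\rangle$ is stochastically dominated by a Yule process of rate $\bar\lambda$, giving $\E[\sup_{t\le T}\langle 1,\bar{\mathbbm{x}}(t)\rangle]<e^{\bar\lambda T}$. Markov's inequality then produces $\bar N$ with $\P(\text{event (i) fails})<\eta/4$.

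For (ii) I first pass from ``alive cells'' to ``all cells that ever lived'', because the position of a dead ancestor propagates to its descendants and depths of lineages must also be bounded. Let $B_T$ count birth events on $[0,T]$; since births occur at total rate at most $\bar\lambda\langle 1,\bar{\mathbbm{x}}(s)\rangle$ one gets $\E B_T\le \bar\lambda T e^{\bar\lambda T}$, and by Markov there is $K$ with $\P(B_T>K)<\eta/4$. On this event the number of distinct indices $\jj\in\JJ$ ever appearing is at most $1+2K$, and so is the depth of the genealogical tree. Now for any such $\jj$,
\[
\mathbf{\bar X}_\jj(t)=\mathbf{\bar X}_\emptyset(0)+\int_{\mathrm{ancestry}}\mathbf{b}(\cdots)\,ds+\sigma\sum_{\kk\preceq \jj}\bigl(W_{1,\kk}(\tau'_\kk)-W_{1,\kk}(\sigma'_\kk)\bigr),
\]
with $\sigma'_\kk,\tau'_\kk\in[0,T]$. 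The drift term is bounded by $\|\mathbf{b}\|_\infty T$ by assumption \ref{ass:Fl}, and each Brownian increment is bounded by $2\sup_{s\le T}|W_{1,\kk}(s)|$. The reflection principle gives a Gaussian tail for $\sup_{s\le T}|W_{1,\kk}(s)|$, so a union bound over the at most $1+2K$ relevant Brownian motions produces $M$ with failure probability $<\eta/4$. Finally, since $\mathbf{\bar X}_\emptyset(0)$ has law $\mu_0\in\mathcal{P}(\mathbb{R}^d)$, tightness of a single probability measure yields $R_0$ with $\P(|\mathbf{\bar X}_\emptyset(0)|\le R_0)\ge 1-\eta/4$; setting $R=R_0+\|\mathbf{b}\|_\infty T+2\sigma(1+2K)M$ and intersecting the four good events gives $\Omega_\eta$.

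The main obstacle is that $\sup_{t\le T}\langle 1,\bar{\mathbbm{x}}(t)\rangle$ alone does \emph{not} bound the number of cells that ever lived on $[0,T]$: a long sequence of (birth followed by immediate death of one daughter) events keeps the cell count small but inflates both the total index set and the depth of the tree. Introducing the auxiliary bound on $B_T$ side-steps this without needing an estimate stronger than Lemma \ref{lm:Nt}, and everything else reduces to elementary Gaussian tail bounds and a union bound over a (random but controlled) finite number of Brownian motions.
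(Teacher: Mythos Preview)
Your argument is correct and follows the same two--step skeleton as the paper's (very terse) proof: part (i) via Yule--process domination plus Markov's inequality, and part (ii) via the fact that finitely many It\^o processes with bounded drift $\mathbf{b}$ are a.s.\ bounded on $[0,T]$; you simply make explicit what the paper dismisses with ``obviously follows'', including the useful observation that one must bound the \emph{total} number of births $B_T$ rather than just $\sup_t\langle 1,\bar{\mathbbm{x}}(t)\rangle$. One small slip: the union bound over Brownian motions cannot be taken over the ``$1+2K$ relevant'' indices, since which indices are relevant is itself random---instead run it over the deterministic set $\{\jj\in\JJ:|\jj|\le K\}$ of size $2^{K+1}-1$, which contains all relevant indices on $\{B_T\le K\}$ and only changes the constant in your choice of $M$.
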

\begin{proof}
The first point is a simple consequence of Lemma \ref{lm:Nt}. Once we have finite nuber of particles, their positions are described by a
finite number of It\^o equations with bounded drift $\mathbf{b}$, so (ii) obviously follows.
\end{proof}

 Let us denote by $\JJ_{\bar N}$ the (finite) set of all indices of the length at most $\bar N$.
 Apparently, if there were not more than $\bar N$ particles of $\bar{\mathbbm{x}}(t)$ up to time $T$,
 then their indices are in $\JJ_{\bar N}$.
\begin{lemma}
\label{lm:delta}
For any $\eta >0$ there exists $\delta>0$ such that
if 
\begin{equation}
\label{eqn:supRrhoDrho}
\sup_{t\in[0,T]}\left(
\sup_{\mathbf{x}\in B(0,R)}\|\varrho ^{n_0}(\mathbf{x})-\varrho(\mathbf{x}) \|
+
\sup_{\mathbf{x}\in B(0,R)}\|\nabla\varrho ^{n_0}(\mathbf{x})-\nabla\varrho(\mathbf{x}) \|
\right)<\delta,
\end{equation}
where $R$ is from Lemma \ref{lm:Omegaeta}, then
the probability that 
process
$\mathbbm{\tilde x}^{n_0}_1$ is different than $\mathbbm{x}^{n_0}_1$
is less than $3\eta $.
\end{lemma}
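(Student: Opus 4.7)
The plan is to work on the event $\Omega_\eta$ supplied by Lemma \ref{lm:Omegaeta}, on which $\bar{\mathbbm{x}}$ has at most $\bar N$ particles, all confined to $B(0,R)$. Because the three processes $\bar{\mathbbm{x}}$, $\mathbbm{\tilde x}^{n_0}_1$ and $\mathbbm{x}^{n_0}_1$ are coupled through the same Brownian motions $W_{1,\jj}$ and the same Poisson clocks $\mathcal{N}_{1,\jj}$, and since $\mathbbm{\tilde x}^{n_0}_1$ and $\mathbbm{x}^{n_0}_1$ follow the same SDE with drift $\nabla\varrho^{n_0}$ whenever their respective cells are alive simultaneously, the two processes can disagree only if the Poisson-driven birth/death events fire at different times. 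I would therefore reduce the whole statement to bounding the probability of such a mismatch.

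First I would compare $\bar{\mathbf X}_\jj$ with $\tilde{\mathbf X}^{n_0}_{1,\jj}$ by Gronwall: both share the same birth time $\bar\sigma_\jj$ and Brownian increments but use drifts $\nabla\varrho$ and $\nabla\varrho^{n_0}$ which differ by at most $\delta$ on $B(0,R)$ under \eqref{eqn:supRrhoDrho}. Writing
\begin{equation*}
\|\bar{\mathbf X}_\jj(t)-\tilde{\mathbf X}^{n_0}_{1,\jj}(t)\|
\le \|\bar{\mathbf X}_{\jj^{\smalleftarrow}}(\bar\sigma_\jj)-\tilde{\mathbf X}^{n_0}_{1,\jj^{\smalleftarrow}}(\bar\sigma_\jj)\|
+\int_{\bar\sigma_\jj}^{t} L_{\mathbf b}\bigl(\|\bar{\mathbf X}_\jj(s)-\tilde{\mathbf X}^{n_0}_{1,\jj}(s)\|+\delta\bigr)\,ds,
\end{equation*}
and iterating Gronwall through at most $\bar N$ generations gives $\sup_{t\in[0,T]}\max_{\jj}\|\bar{\mathbf X}_\jj(t)-\tilde{\mathbf X}^{n_0}_{1,\jj}(t)\|\le C_1(\bar N,T)\,\delta$. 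For $\delta$ small, this keeps $\tilde{\mathbf X}^{n_0}$ in a mild enlargement of $B(0,R)$, which can be absorbed into $R$ at the cost of at most an additional $\eta$ in Lemma \ref{lm:Omegaeta}.

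Second, I would argue by induction on the generation that $\mathbbm{x}^{n_0}_1=\mathbbm{\tilde x}^{n_0}_1$ as long as no point of $\mathcal{N}_{1,\jj}$ lands in the symmetric difference of the two rate stripes. Assuming the mother coincides in both processes, the daughters start from the same place, satisfy the same SDE, and use the same Brownian motion, so their paths coincide on their common life interval. The death event for cell $\jj$ in $\mathbbm{x}^{n_0}_1$ is triggered at the first point of $\mathcal{N}_{1,\jj}$ with $z\le(\lambda_{\rm b}+\lambda_{\rm d})(\mathbf{X}^{n_0}_{1,\jj}(t),\varrho^{n_0}(t,\mathbf{X}^{n_0}_{1,\jj}(t)))$, while in $\mathbbm{\tilde x}^{n_0}_1$ the triggering stripe uses $\bar{\mathbf X}_\jj(t)$ and $\varrho$ in place of $\mathbf{X}^{n_0}_{1,\jj}(t)$ and $\varrho^{n_0}$. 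By the Gronwall estimate above, \eqref{eqn:supRrhoDrho} and the Lipschitz continuity of $\lambda_{\rm b},\lambda_{\rm d}$, the symmetric difference of these two stripes on $[0,T]$ has planar Lebesgue measure at most $C_2\delta T$, and the same holds for the birth-only stripe. Since $\mathcal{N}_{1,\jj}$ has Lebesgue intensity, a union bound over the $\bar N$ surviving cell lines gives an overall failure probability $\le C_3\bar N T\delta$.

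Combining, I would choose $\delta<\eta/(C_3\bar N T)$; then outside an event of probability at most $\eta$ no discordant Poisson event occurs and $\mathbbm{\tilde x}^{n_0}_1=\mathbbm{x}^{n_0}_1$ there. Together with the $\eta$ lost on $\Omega_\eta^c$ and the $\eta$ absorbed into the enlargement of $R$, the total probability of mismatch is at most $3\eta$. The delicate step I anticipate is the Gronwall cascade through generations: the drift perturbation propagates through up to $\bar N$ births, so $C_1(\bar N,T)$ grows like $e^{\bar N L_{\mathbf b}T}$; what saves us is that on $\Omega_\eta$ the number $\bar N$ is a fixed deterministic bound, so no pathwise randomness leaks into the final constants.
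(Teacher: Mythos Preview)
Your approach matches the paper's: restrict to $\Omega_\eta$, Gronwall-compare $\bar{\mathbf X}_\jj$ with $\tilde{\mathbf X}^{n_0}_{1,\jj}$, then bound the Lebesgue measure of the symmetric-difference rate stripes and control the Poisson probability by a union bound over the at most $\bar N$ indices. One point needs tightening. In your displayed Gronwall inequality the integrand $L_{\mathbf b}\bigl(\|\bar{\mathbf X}_\jj-\tilde{\mathbf X}^{n_0}_{1,\jj}\|+\delta\bigr)$ tacitly assumes $\bigl|\nabla\varrho^{n_0}(s,\tilde{\mathbf X}^{n_0}_{1,\jj}(s))-\nabla\varrho(s,\bar{\mathbf X}_\jj(s))\bigr|\le\delta$, but the two gradients are evaluated at \emph{different} spatial points; you are missing a term of the form $L_\varrho\,\|\bar{\mathbf X}_\jj-\tilde{\mathbf X}^{n_0}_{1,\jj}\|$ coming from the spatial Lipschitz constant of one of the gradient fields. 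The paper supplies this via the Lipschitz constant of the \emph{random} field $\nabla\varrho^{n_0}$, which by the a~priori estimates \eqref{eqn:rhon0Lip}--\eqref{eqn:Hessrhon} is bounded by some $L_{\varrho,\eta}$ with probability at least $1-\eta$; \emph{that} is where the third $\eta$ in the bound $3\eta$ comes from, not from an enlargement of $R$ (enlarging $R$ in Lemma~\ref{lm:Omegaeta} costs no extra probability). Your alternative---use the deterministic Lipschitz constant of $\nabla\varrho$ and bootstrap $\tilde{\mathbf X}^{n_0}$ into a slightly larger ball---can be made to work, but then the hypothesis \eqref{eqn:supRrhoDrho} has to be stated on that larger ball, and the accounting of the three $\eta$'s changes accordingly.
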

\begin{proof}
Throughout the proof we assume we are in $\Omega_\eta$ from Lemma \ref{lm:Omegaeta},
$\bar N$ and $R$ are as in Lemma \ref{lm:Omegaeta} and every 
particle of $\mathbbm{\bar x}$ alive during $[0,T]$ has index $\jj\in\JJ_{\bar N}$.
The movement of  particles of $\mathbbm{\tilde x}^{n_0}_1$ and $\mathbbm{x}^{n_0}_1$
is given by the same equation, so the processes are different if and only if
any time of birth or death is different. Recall that $\sigma^{n_0}_{1,\jj}$ and $\tau^{n_0}_{1,\jj}$
are given by \eqref{eqn:sigmadef} and \eqref{eqn:taudef}, and $\bar\sigma_{\jj}$ and $\bar\tau_{\jj}$
analogously with $\mathbf{X}^{n_0}_{i,\jj}$ and $\varrho ^{n_0}$ replaced by 
$\mathbf{\bar X}_{\jj}$ and $\varrho$.
Therefore, if for every $\jj\in\JJ_{\bar N}$ there are no points of $\mathcal{N}_{1,\jj}$
in between 
 $\lambda\bigl(\mathbf{X}^{n_0}_{i,\jj}(t),\varrho^{n_0}(t,\mathbf{X}^{n_0}_{i,\jj}(t))\bigr)$
and $\lambda\bigl(\mathbf{\bar X}_{\jj}(t),\varrho(t,\mathbf{\bar X}_{\jj}(t))\bigr)$ 
nor between
 $\lambda_{\text{b}}\bigl(\mathbf{X}^{n_0}_{i,\jj}(t),\varrho^{n_0}(t,\mathbf{X}^{n_0}_{i,\jj}(t))\bigr)$
and $\lambda_{\text{b}}\bigl(\mathbf{\bar X}_{\jj}(t),\varrho(t,\mathbf{\bar X}_{\jj}(t))\bigr)$
for $t\in\left[\bar\sigma_{\jj},\bar\tau_{\jj}\right)$, then
for all $\jj\in\JJ_{\bar N}$ we have
$\sigma^{n_0}_{1,\jj}=\bar\sigma_{\jj}$, $\tau^{n_0}_{1,\jj0}=\bar\tau_{1,\jj0}$  
and  $\tau^{n_0}_{1,\jj1}=\bar\tau_{1,\jj1}$.

Note that  for $t\in\left[\bar\sigma_{\jj},\bar\tau_{\jj}\right)$ we have
\begin{align*}
\mathbf{\tilde X}^{n_0}_{1,\jj}(t)-\mathbf{\bar X}_{\jj}(t)=&
\mathbf{\tilde X}^{n_0}_{1,\jj}\left(\bar\sigma_{\jj}\right)
-\mathbf{\bar X}_{\jj}\left(\bar\sigma_{\jj}\right)
\\&
+ \int_{\bar\sigma_{\jj}}^t 
	\left[\mathbf{b}(\mathbf{\tilde X}^{n_0}_{1,\jj}(s), 
		\nabla\varrho^{n_0}(s,\mathbf{\tilde X}^{n_0}_{i,\jj}(s)))
		-\mathbf{b}(\mathbf{\bar X}_{\jj}(s), 
		\nabla\varrho(s,\mathbf{\bar X}_{\jj}(s)))
	\right]
ds\end{align*}
where
\begin{multline*}
\left| 
	\mathbf{b}(\mathbf{\tilde X}^{n_0}_{1,\jj}(s), 
	\nabla\varrho^{n_0}(s,\mathbf{\tilde X}^{n_0}_{i,\jj}(s)))
	-\mathbf{b}(\mathbf{\bar X}_{\jj}(s), 
	\nabla\varrho(s,\mathbf{\bar X}_{\jj}(s)))
\right|
\le\\
L_{\mathbf{b}}
\biggl(
	|\mathbf{\tilde X}^{n_0}_{i,\jj}(s)-\mathbf{\bar X}_{\jj}(s))|
	+
	\left|
		\nabla\varrho^{n_0}(s,\mathbf{\tilde X}^{n_0}_{i,\jj}(s)))
		-\nabla\varrho^{n_0}(s,\mathbf{\bar X}_{\jj}(s)))
	\right|	
	+\\
	\left|\nabla\varrho^{n_0}(s,\mathbf{\bar X}_{\jj}(s)))
	-\nabla\varrho(s,\mathbf{\bar X}_{\jj}(s)))
	\right|
\biggr).
\end{multline*}
By \eqref{eqn:rhon0Lip} and \eqref{eqn:Hessrhon},  
functions $\varrho^{n_0}$ and $\nabla\varrho^{n_0}$ are Lipschitz 
with some constant $L_{\varrho,\eta} $ with probability greater  then $1-\eta$,
so if we denote 
$\Delta_{1}(t) = \max_{\jj\in\JJ_{\bar N}}
\left|\mathbf{\tilde X}^{n_0}_{1,\jj}(t)-\mathbf{\bar X}_{\jj}(t)\right|$
then by \eqref{eqn:supRrhoDrho}
\begin{align*}
\Delta_{1}(t)\le&
L_{\mathbf{b}}\int_0^t
	\Bigr((1+L_{\varrho,\eta} )
		\Delta_{1}(s)
	+\sup_{t \in [0,T]}\sup_{\mathbf{x}\in B(0,R)}\left\|
		\nabla\varrho^{n_0}_t(\mathbf{x})-\nabla\varrho_t(\mathbf{x})
	\right\|\Bigr)
ds
\\\le&
L_{\mathbf{b}}\int_0^t
\Bigl(	(1+L_{\varrho,\eta} )
		\Delta_{1}(s)
	+ \delta
\Bigr)ds.
\end{align*}
By Gronwall's inequality we have  
$\Delta_{1}(t)\le\delta\, t \,L_{\mathbf{F}} e^{L_{\mathbf{F}}(1+L_{\varrho,\eta} )t}$ so
\begin{equation}
\label{eqn:Delta}
\sup_{t\in[0,T]}\Delta_{1}(t)\le 
c^{(1)}_{T,\eta}\delta.
\end{equation}
and thus
\begin{equation}
\label{eqn:Deltalambda}
\left|\lambda\bigl(\mathbf{X}^{n_0}_{i,\jj}(t),\varrho^{n_0}(t,\mathbf{X}^{n_0}_{i,\jj}(t))\bigr)
-\lambda\bigl(\mathbf{\bar X}_{\jj}(t),\varrho(t,\mathbf{\bar X}_{\jj}(t))\bigr)\right|
\le L_{\lambda}((1+L_{\varrho,\eta}) \Delta_{1}(t)+\delta) 
\le c^{(2)}_{T,\eta} \delta,
\end{equation}
where $c^{(i)}_{T,\eta}$, $i=1,2$ depend only on $T$ and $L_{\varrho,\eta}$.
Similar estimate holds for $\lambda_{\text{b}}$.
Let
\begin{equation*}
A_\jj=
\biggl\{(t,z)\subset[0,T]\times [0,\bar\lambda]:
	t\in\left[\bar\sigma_{\jj},\bar\tau_{\jj}\right),
	z\in[\hat\lambda^{n_0}_{\text{b}}(t),\check\lambda^{n_0}_{\text{b}}(t)]
	\text{ or }
	z\in[\hat\lambda^{n_0}(t),\check\lambda^{n_0}(t)]
\biggr\},
\end{equation*}
where
$\hat\lambda^{n_0}(t)=$ $\min
\left\{
		\lambda\bigl(\mathbf{X}^{n_0}_{i,\jj}(t),\varrho^{n_0}(t,\mathbf{X}^{n_0}_{i,\jj}(t))\bigr),
		\lambda\bigl(\mathbf{\bar X}_{\jj}(t),\varrho(t,\mathbf{\bar X}_{\jj}(t))\bigr)
	\right\}$
and
$\check\lambda^{n_0}(t)=$\linebreak $\max
\{		\lambda\bigl(\mathbf{X}^{n_0}_{i,\jj}(t),\varrho^{n_0}(t,\mathbf{X}^{n_0}_{i,\jj}(t))\bigr),
		\lambda\bigl(\mathbf{\bar X}_{\jj}(t),\varrho(t,\mathbf{\bar X}_{\jj}(t))\bigr)
\}$
and analogously for $\lambda_{\text{b}}$. 
By \eqref{eqn:Deltalambda} the area of $A_\jj$ is less than $T\,c_T\,\delta$ for any $\jj\in\JJ_{\bar N}$.
Therefore, taking $\delta$ sufficiently small
we have
\[
\P\left(\mathcal{N}_{1,\jj}(A_\jj)>0\text{ for any }\jj\in\JJ_{\bar N}\right)
<\eta.\]
\end{proof}

Now we are ready to prove Theorem \ref{thm:conv}. 
\begin{proof}[Proof of Theorem \ref{thm:conv}]
Fix $\epsilon>0$. We have to prove that 
$\P\left(\sup_{t\in[0,T]}d_{\mathbb{X}}\left(
\mathbbm{x}^{n_0}_1(t),\mathbbm{\bar x}(t)
\right)>\varepsilon \right)$
tends to 0 as ${n_0\to\infty}$.
To that end fix $\eta>0$, and take $R>0$ from Lemma \ref{lm:Omegaeta} for this $\eta$.
Take $\delta>0$ small enough for  Lemma \ref{lm:delta} to be satisfied and such that 
$c^{(1)}_{T,\eta}\delta\le\varepsilon $ in \eqref{eqn:Delta}. 
Since, by Theorem \ref{thm:convinD}, $\varrho^{n_0}$ 
converges to $\varrho$ 
in probability on $\mathcal{C}_T$ in the norm 
$\|f(\cdot ,\cdot )\|_{\mathcal{C}_T}=\sup_{t\in[0,T]}\|f(t,\cdot )\|_{C^1_b}$,
for sufficiently big $n_0$ we have
\eqref{eqn:supRrhoDrho} with probability $1-\eta $.
Using Lemma \ref{lm:delta} we know that with probability at least $1-3\eta$
we have $\mathbbm{\tilde x}^{n_0}_1=\mathbbm{x}^{n_0}_1$
and $\sup_{t\in[0,T]}d_{\mathbb{X}}\left(
\mathbbm{x}^{n_0}_1(t),\mathbbm{\bar x}(t)
\right)>\varepsilon $ by \eqref{eqn:Delta}. 
\end{proof} 


\end{document}